\numberwithin{equation}{section}
\newtheorem{dfn}{Definition}[section]
\newtheorem{thm}[dfn]{Theorem}
\newtheorem{lma}[dfn]{Lemma}
\newtheorem{ppsn}[dfn]{Proposition}
\newtheorem{crlre}[dfn]{Corollary}
\newtheorem{xmpl}[dfn]{Example}
\newtheorem{rmrk}[dfn]{Remark}
\DeclarePairedDelimiterX{\norm}[1]{\lVert}{\rVert}{#1}
\DeclarePairedDelimiterX{\bnorm}[1]{\big\lVert}{\big\rVert}{#1}
\DeclarePairedDelimiterX{\Bnorm}[1]{\Big\lVert}{\Big\rVert}{#1}
\begin{document}
	
	\title[On some topology generated by $\mathcal{I}$-density function]{On some topology generated by $\mathcal{I}$-density function}

\author[Debnath] {Indrajit Debnath}
	\address{Department of Mathematics, The University of Burdwan, Burdwan-713104, West Bengal, India}
	\email{ind31math@gmail.com}
 
	\author[Banerjee] {Amar Kumar Banerjee}
	\address{Department of Mathematics, The University of Burdwan, Burdwan-713104, West Bengal, India}
	\email{akbanerjee1971@gmail.com, akbanerjee@math.buruniv.ac.in}

	\subjclass[2020]{26E99, 54C30, 40A35}
	
	\keywords{Density topology, ideal, $\mathcal{I}$-density topology}
	
	\begin{abstract}
		In this paper we have studied on $\mathcal{I}$-density function using the notion of $\mathcal{I}$-density, introduced by Banerjee and Debnath \cite{banerjee 4} where $\mathcal{I}$ is an ideal of subsets of the set of natural numbers. We have explored certain properties of $\mathcal{I}$-density function and induced a topology using this function in the space of reals namely $\mathcal{I}$-density topology and we have given a characterization of the Lebesgue measurable subsets of reals in terms of Borel sets in $\mathcal{I}$-density topology. 
	\end{abstract}

	\maketitle
		\section{Introduction and Preliminaries}
 The convergence of sequences plays a significant role in the study of basic
mathematical theory. The idea of statistical convergence of sequences was introduced in the middle of twentieth century by H. Fast \cite{Fast}. For $K \subset \mathbb{N}$, the set of natural numbers and $n \in \mathbb{N}$ let $K_n =\{k\in K: k \leq n\}$. The natural density of the set $K$ is defined by $d(K)=\lim_{n \rightarrow{\infty}}\frac{|K_n|}{n}$, provided the limit exists, where $|K_n|$ stands for the cardinality of the set $K_n$. A sequence $\{x_n\}_{n \in \mathbb{N}}$ of real numbers is said to be statistically convergent to $x_0$ if for each $\epsilon > 0$ the set $K(\epsilon) = \{k\in \mathbb{N} : |x_k - x_0| \geq \epsilon\}$ has natural density zero.


	Later on, in the year $2000$, statistical convergence of real sequences were generalized to the idea of $\mathcal{I}$-convergence of real sequences by P. Kostyrko et al. \cite{Kostyrko 2000} using the notion of ideal  $\mathcal{I}$ of subsets of $\mathbb{N}$, the set of natural numbers. A subcollection  $\mathcal{I} \subset 2^\mathbb{N}$ is called an ideal if $A,B \in  \mathcal{I}$ implies $A \cup B \in  \mathcal{I}$ and $A\in  \mathcal{I}, B\subset A$ imply $B\in \mathcal{I}$. $\mathcal{I}$ is called nontrivial ideal if $\mathcal{I} \neq \{\emptyset\}$ and $\mathbb{N}\notin \mathcal{I}$. $\mathcal{I}$ is called admissible if it contains all the singletons. It is easy to verify that the family $\mathcal{I}_d=\{A \subset \mathbb{N}: d(A)=0\}$ forms a non-trivial admissible ideal of subsets of $\mathbb{N}$. If $\mathcal{I}$ is a proper non-trivial ideal then the family of sets $\mathcal{F}(\mathcal{I}) = \{M\subset \mathbb{N} : \mathbb{N}\setminus M \in \mathcal{I}\}$ is a filter on $\mathbb{N}$ and it is called the filter associated with the ideal $\mathcal{I}$ of $\mathbb{N}$.

	A sequence $\{x_n\}_{n \in \mathbb{N}}$ of real numbers is said to be $\mathcal{I}$-convergent \cite{Kostyrko 2000} to $x_0$ if the set $K(\epsilon) = \{k\in \mathbb{N} : |x_k - x_0| \geq \epsilon\}$ belongs to $\mathcal{I}$ for each $\epsilon>0$. A sequence $\{x_n\}_{n \in \mathbb{N}}$ of real numbers is said to be $\mathcal{I}$-bounded if there is a real number $M>0$ such that $\{k \in \mathbb{N}:|x_k|>M\}\in \mathcal{I}$. Further many works were carried out in this direction by many authors \cite{banerjee 2,banerjee 3,Lahiri 2005}.
	
	K. Demirci \cite{Demirci} introduced the notion of  $\mathcal{I}$-limit superior and inferior of real sequence and proved several basic properties.

	Let $\mathcal{I}$ be an admissible ideal in $\mathbb{N}$ and $x=\{x_n\}_{n \in \mathbb{N}}$ be a real sequence. Let,
	$B_x =\{b\in \mathbb{R} : \{k:x_k > b\}\notin \mathcal{I}\}$ and $A_x =\{a\in \mathbb{R} : \{k:x_k < a\}\notin \mathcal{I}\}$. Then the $\mathcal{I}$-limit superior of $x$ is given by,
	\begin{equation*}
		\mathcal{I}-\limsup  x  = \left\{
		\begin{array}{lr}
			\sup B_x & \text{if}\ B_x \neq \phi\\
			- \infty & \text{if}\ B_x = \phi
			
		\end{array} 
		\right.
	\end{equation*}

	and the $\mathcal{I}$-limit inferior of $x$ is given by,
	\begin{equation*}
		\mathcal{I}-\liminf  x = \left\{
		\begin{array}{lr}
			\inf A_x & \text{if}\ A_x \neq \phi\\
			\infty & \text{if}\ A_x = \phi
			
		\end{array} 
		\right.
	\end{equation*}
	Further Lahiri and Das \cite{Lahiri 2003} carried out some more works in this direction. Throughout the paper the ideal $\mathcal{I}$ will always stand for a nontrivial admissible ideal of subsets of $\mathbb{N}$.

	In 1961, Casper Goffman and Daniel Waterman \cite{Goffman} introduced the notion of homogeneity of sets relative to metric density and  Euclidean $n$-space was topologized by taking the homogeneous sets as open sets and this topology was referred to as $d$-topology or density topology.	The idea of density functions and the corresponding density topology were studied in several spaces like the space of real numbers \cite{Riesz}, Euclidean $n$-space \cite{Troyer}, metric spaces \cite{Lahiri 1998} etc.

	

	In the recent past the notion of classical Lebesgue density point was generalized by many authors by weakening the assumptions on the sequences of intervals and as a result several notions like $\langle s \rangle$-density point by M. Filipczak and J. Hejduk \cite{Filipczak 2004}, $\mathcal{J}$-density point by J. Hejduk and R. Wiertelak  \cite{Hejduk 2014}, $\mathcal{S}$-density point by F. Strobin and R. Wiertelak \cite{Strobin} were obtained. Significant generalizations on density topology was studied by W. Wojdowski in \cite{Wojdowski, Wojdowski 2013}. 
 

	In this paper we try to give the notion of $\mathcal{I}$-density function with the help of $\mathcal{I}$-density introduced by Banerjee and Debnath \cite{banerjee 4} in the space of reals. In Section \ref{section 3} we have explored some properties of this function. We have considered $\mathcal{T}^{\mathcal{I}}$ to be the collection of measurable subsets of $\mathbb{R}$ such that each point of the set is an $\mathcal{I}$-density point. Finally in Section \ref{section 4} we have proved that the collection $\mathcal{T}^{\mathcal{I}}$ forms a topology on the set of reals. The mode of proofs are different from that given in \cite{banerjee 4}. We have characterized the Lebesgue measurable sets in the usual topology on reals as the Borel sets in $\mathcal{T}^{\mathcal{I}}$.

 We shall use the notation $\mathcal{L}$ for the $\sigma$-algebra of Lebesgue measurable sets and $\lambda$ for the Lebesgue measure. Throughout $\mathbb{R}$ stands for the set of all real numbers. The symbol $\mathcal{T}_U$ stands for the natural topology on $\mathbb{R}$. Wherever we write $\mathbb{R}$ it means that $\mathbb{R}$ is equipped with natural topology unless otherwise stated. The symmetric difference of two sets $A$ and $B$ is $(A \setminus B)\cup (B \setminus A)$ and it is denoted by $A \triangle B$. By \enquote{a sequence of closed intervals $\{J_n\}_{n \in \mathbb{N}}$ about a point $p$} we mean $p \in \bigcap_{n \in \mathbb{N}}J_n$.

	\section{$\mathcal{I}$-density}\label{section 2}

	\begin{dfn} [\cite{banerjee 4}]
		For a Lebesgue measurable set $E\in \mathcal{L}$, a point $p\in \mathbb{R}$ and $n \in \mathbb{N}$ the upper $\mathcal{I}$-density of $E$ at the point $p$ denoted by $\mathcal{I}-d^-(p,E)$ and the lower $\mathcal{I}$-density of $E$ at the point $p$ denoted by $\mathcal{I}-d_-(p,E)$ are defined as follows: 
		Suppose $\{J_n\}_{n \in \mathbb{N}}$ be a sequence of closed intervals about $p$ such that
		\begin{center}
			$\mathscr{S}(J_n)=\{n\in \mathbb{N}:0<\lambda(J_n)<\frac{1}{n}\} \in \mathcal{F}(\mathcal{I})$
		\end{center} 
	For any such $\{J_n\}_{n \in \mathbb{N}}$ we take
		\begin{equation*}
			x_n = \frac{\lambda(J_n \cap E)}{\lambda(J_n)}  \ \mbox{for all} \ n \in \mathbb{N}
		\end{equation*}
		Then $\{x_n\}_{n \in \mathbb{N}}$ is a sequence of non-negative real numbers. Now if 
		$$B_{x_k} =\{b\in \mathbb{R} : \{k:x_k > b\}\notin \mathcal{I}\}$$ and $$A_{x_k} =\{a\in \mathbb{R} : \{k:x_k < a\}\notin \mathcal{I}\}$$ we define,
				\begin{align*}
				\mathcal{I}-d^-(p,E) &= \sup \{ \sup B_{x_n}: \{J_n\}_{n \in \mathbb{N}} \ \mbox{such that} \ \mathscr{S}(J_n) \in \mathcal{F}(\mathcal{I})\} \\
				&= \sup \{\mathcal{I}-\limsup \ x_n: \{J_n\}_{n \in \mathbb{N}} \ \mbox{such that} \ \mathscr{S}(J_n) \in \mathcal{F}(\mathcal{I})\} 
			\end{align*}
		and 
		\begin{align*}
			\mathcal{I}-d_{-}(p,E) &= \inf \{ \inf A_{x_n}: \{J_n\}_{n \in \mathbb{N}} \ \mbox{such that} \ \mathscr{S}(J_n) \in \mathcal{F}(\mathcal{I})\} \\
			&= \inf \{\mathcal{I}-\liminf \ x_n: \{J_n\}_{n \in \mathbb{N}} \ \mbox{such that} \ \mathscr{S}(J_n) \in \mathcal{F}(\mathcal{I})\} 
		\end{align*}

  In the above two expressions it is to be understood that $\{J_n\}_{n \in \mathbb{N}}$'s are closed intervals about the point $p$. Now, if $\mathcal{I}-d_-(p,E)=\mathcal{I}-d^-(p,E)$ then we denote the common value by $\mathcal{I}-d(p,E)$ which we call as $\mathcal{I}$-density of $E$ at the point $p$.
	\end{dfn}
	
	A point $p_0 \in \mathbb{R}$ is an $\mathcal{I}$-\textit{density point} of $E\in \mathcal{L}$ if $\mathcal{I}-d(p_0,E)=1$. 
	
	If a point $p_0\in \mathbb{R}$ is an $\mathcal{I}$-density point of the set $\mathbb{R}\setminus E$, then $p_0$ is an $\mathcal{I}$-\textit{dispersion point} of $E$.

		\begin{rmrk}
		The notion of $\mathcal{I}$-density point is more general than the notion of density point as the collection of intervals about the point $p$ considered in case of  $\mathcal{I}$-density is larger than that considered in case of classical density which is illustrated in the following example.
	\end{rmrk}
	
	\begin{xmpl}
		Let us consider the ideal $\mathcal{I}_d$ of subsets of $\mathbb{N}$ where $\mathcal{I}_d$ is the ideal containing all those subsets of $\mathbb{N}$ whose natural density is zero. Now, for any point $p \in \mathbb{R}$ consider the following collections of sequences of intervals:
		\begin{center}
			$\mathscr{C}_{p(\mathcal{I}_{fin})}=\{\{J_n\}_{n \in \mathbb{N}}: \{J_n\} \textrm{is a sequence of closed intervals about} \ p \ \textrm{such that} \  \mathscr{S}(J_n) \in \mathcal{F}(\mathcal{I}_{fin}) \}$ and 
		\end{center}
		\begin{center}
			$\mathscr{C}_{p(\mathcal{I}_d)}=\{\{J_n\}_{n \in \mathbb{N}}: \{J_n\} \textrm{is a sequence of closed intervals about} \ p \ \textrm{such that} \  \mathscr{S}(J_n)\in \mathcal{F}(\mathcal{I}_d) \}$
		\end{center}
		We claim that $\mathscr{C}_{p(\mathcal{I}_{fin})} \subsetneqq \mathscr{C}_{p(\mathcal{I}_d)}$. Since any finite subset of $\mathbb{N}$ has natural density zero so $\mathcal{I}_{fin} \subset \mathcal{I}_{d}$.\\ 
		Now in particular let us take the following sequence $\{K_n\}_{n \in \mathbb{N}}$ of closed intervals about a point $p$.

	\begin{equation*}
		K_n =
		\left\{
		\begin{array}{ll}
			\left[p-\frac{1}{2n+1},p+\frac{1}{2n+1}\right]  & \textrm{for} \ n \neq m^2 \ \textrm{where} \ m \in \mathbb{N} \\
			\left[p-n,p+n\right] & \textrm{for} \ n=m^2 \ \textrm{where} \ m \in \mathbb{N}
		\end{array}
		\right.
	\end{equation*}
		
		We observe that for $n \neq m^2$, $\lambda(K_n)= \frac{2}{2n+1} < \frac{1}{n}$ and for $n=m^2$, $\lambda(K_{n})=2n \nless \frac{1}{n}$. Therefore, $\mathscr{S}(K_n)=\{n \in \mathbb{N}: 0< \lambda(K_n)<\frac{1}{n}\}=\{n: n \neq m^2\} \in \mathcal{F}(\mathcal{I}_d)$. But since $\mathbb{N} \setminus \mathscr{S}(K_n)=\{n: n = m^2\}$ is not a finite set so it doesnot belong to $\mathcal{I}_{fin}$. As a result $\{K_n\}\in \mathscr{C}_{p(\mathcal{I}_d)} \setminus \mathscr{C}_{p(\mathcal{I}_{fin})}$\\
		
		Let us take the set $E$ to be the open interval $(-1,1)$ and the point $p$ to be $0$. Let $\{K_n\}_{n \in \mathbb{N}} \in \mathscr{C}_{0(\mathcal{I}_d)} \setminus \mathscr{C}_{0(\mathcal{I}_{fin})}$ be taken as above. Now if $x_n=\frac{\lambda(K_n \cap E)}{\lambda(K_n)}$ then
		
		\begin{equation*}
			x_n =
			\left\{
			\begin{array}{ll}
				1  & \mbox{if } n \neq m^2\\
				\frac{1}{m^2} & \mbox{if } n=m^2
			\end{array}
			\right.
		\end{equation*}
	Now let us calculate $\limsup$ and $\liminf$ of the sequence $\{x_n\}$. Thus,
 $$ \limsup x_n = \inf_{n} \sup_{k \geq n} x_k =1 \ \mbox{and} \ \liminf x_n = \sup_{n} \inf_{k \geq n} x_k = 0$$
 
 Consequently, $\lim_n x_n$ does not exists. 
	\\
	Now let us calculate $\mathcal{I}_d-\limsup$ and $\mathcal{I}_d-\liminf$ of the sequence $\{x_n\}$. Here $$B_{x_n} =\{b\in \mathbb{R} : \{k:x_k > b\}\notin \mathcal{I}_d\}=(-\infty,1)$$ and $$A_{x_n} =\{a\in \mathbb{R} : \{k:x_k < a\}\notin \mathcal{I}_d\}=(1,\infty)$$
	So, $\mathcal{I}_d-\limsup x_n = \sup B_{x_n}=1$ and $\mathcal{I}_d-\liminf x_n = \inf A_{x_n}=1$. Thus both $\mathcal{I}_d-\limsup$ and $\mathcal{I}_d-\liminf$ of the sequence $\{x_n\}$ are equal and equals to 1. Next we will show that $0$ is $\mathcal{I}_d$-density point of the set $E$. 
	
	Given any sequence of closed intervals $\{J_n\}_{n \in \mathbb{N}}$ about the point $0$ such that $\mathscr{S}(J_n)\in \mathcal{F}(\mathcal{I}_d)$ we have $\{n:J_n \subset E\} \in \mathcal{F}(\mathcal{I}_d)$. For if $\mathscr{S}(J_n)=\{k_1<k_2<\cdots<k_n<\cdots\}$ (say). Then there exists $n_0 \in \mathbb{N}$ such that for $k_n > k_{n_0}$, $J_{k_n} \subset E$. Thus, $\{n:J_n \subset E\} \supset \mathscr{S}(J_n) \setminus \{k_1,k_2,\cdots,k_{n_0}\}$. Since $\mathbb{N} \setminus \{k_1,k_2,\cdots,k_{n_0}\} \in \mathcal{F}(\mathcal{I}_d)$ so $$\mathscr{S}(J_n) \setminus \{k_1,k_2,\cdots,k_{n_0}\}=\mathscr{S}(J_n) \cap (\mathbb{N} \setminus \{k_1,k_2,\cdots,k_{n_0}\}) \in \mathcal{F}(\mathcal{I}_d)$$ Now if, $J_n \subset E$ then $r_n=\frac{\lambda(J_n \cap E)}{\lambda(J_n)}=\frac{\lambda(J_n)}{\lambda(J_n)}=1$. Thus, $\{n:r_n=1\} \supset \{n:J_n \subset E\}$. Therefore, $\{n:r_n=1\} \in \mathcal{F}(\mathcal{I}_d)$. Therefore, $B_{r_n}=(-\infty,1)$ and $A_{r_n}=(1,\infty)$ and so, $\mathcal{I}_d-\limsup r_n = \sup B_{r_n}=1$ and $\mathcal{I}_d-\liminf r_n = \inf A_{r_n}=1$. This is true for all $\{J_n\}_{n \in \mathbb{N}} \in \mathscr{C}_{0(\mathcal{I}_d)}$. Hence,
	
		\begin{align*}
		\mathcal{I}_d-d^-(0,E) &= \sup \{ \sup B_{r_n}: \{J_n\}_{n \in \mathbb{N}} \ \mbox{such that} \ \mathscr{S}(J_n) \in \mathcal{F}(\mathcal{I}_d)\} =1
	\end{align*}
	and
	\begin{align*}
		\mathcal{I}_d-d_{-}(0,E) &= \inf \{ \inf A_{r_n}: \{J_n\}_{n \in \mathbb{N}} \ \mbox{such that} \ \mathscr{S}(J_n) \in \mathcal{F}(\mathcal{I}_d)\} =1
	\end{align*}
		Hence $\mathcal{I}_d-d(0,E)$ exists and equals to $1$. So, 0 is an $\mathcal{I}_d$-density point of the set $E$.
	\end{xmpl}

 Here we are stating some important results which will be needed later in our discussion and for the sake of completeness we are giving the proof of the results cited from \cite{banerjee 4}.
	
	\begin{thm}[\cite{Demirci}]{For any real sequence $x$, $\mathcal{I}-\liminf x \leq \mathcal{I}-\limsup x$.}
	\end{thm}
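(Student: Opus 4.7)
The plan is to reduce the inequality to a contradiction with the non-triviality of the ideal $\mathcal{I}$. Suppose, toward contradiction, that $\mathcal{I}\text{-}\limsup x < \mathcal{I}\text{-}\liminf x$. Since both quantities are then finite (this already handles the ``finite'' subcase; I will address the infinite cases below), I can pick real numbers $c,d$ with
\begin{equation*}
\mathcal{I}\text{-}\limsup x \;<\; c \;<\; d \;<\; \mathcal{I}\text{-}\liminf x.
\end{equation*}
Because $c$ exceeds $\sup B_x$, it fails to lie in $B_x$, so $\{k:x_k>c\}\in\mathcal{I}$. Because $d$ is strictly below $\inf A_x$, it fails to lie in $A_x$, so $\{k:x_k<d\}\in\mathcal{I}$.

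Next I would argue that these two ``small'' sets actually cover all of $\mathbb{N}$. For any $k\in\mathbb{N}$, either $x_k<d$ or $x_k\geq d$; in the latter case $x_k\geq d>c$, so $k\in\{k:x_k>c\}$. Hence
\begin{equation*}
\{k:x_k>c\}\cup\{k:x_k<d\} \;=\; \mathbb{N}.
\end{equation*}
Since $\mathcal{I}$ is closed under finite unions, this forces $\mathbb{N}\in\mathcal{I}$, contradicting non-triviality. This is the crux of the argument; the main (if mild) obstacle is just making sure the separating constants $c,d$ are chosen strictly between the two extended real numbers, which is why the infinite boundary cases need separate treatment.

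Finally I would dispose of the boundary cases. If $B_x=\emptyset$, then for every $b\in\mathbb{R}$ the set $\{k:x_k>b\}$ lies in $\mathcal{I}$, so its complement $\{k:x_k\leq b\}$ lies in $\mathcal{F}(\mathcal{I})$; consequently $\{k:x_k<b+1\}\supseteq\{k:x_k\leq b\}$ is also in $\mathcal{F}(\mathcal{I})$ and therefore not in $\mathcal{I}$. This shows every real is in $A_x$, so $\mathcal{I}\text{-}\liminf x=-\infty=\mathcal{I}\text{-}\limsup x$. A symmetric argument handles $A_x=\emptyset$, giving $\mathcal{I}\text{-}\limsup x=+\infty$. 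In either degenerate case the asserted inequality is trivial, so the proof is complete.
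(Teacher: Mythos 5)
Your proof is correct: the separation of $\sup B_x$ and $\inf A_x$ by two constants $c<d$, the observation that $\{k:x_k>c\}\cup\{k:x_k<d\}=\mathbb{N}$ would force $\mathbb{N}\in\mathcal{I}$, and the handling of the degenerate cases $B_x=\emptyset$ and $A_x=\emptyset$ (using that a set in $\mathcal{F}(\mathcal{I})$ cannot lie in $\mathcal{I}$ for a nontrivial ideal) are all sound. The paper states this result without proof, citing Demirci, and your argument is the standard one, so it is essentially the intended proof.
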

	
	\begin{thm}[\cite{banerjee 4}] For any Lebesgue measurable set $A \subset \mathbb{R}$ and any point $p \in \mathbb{R}$,
		\begin{center}
			$\mathcal{I}-d_-(p,A) \leq \mathcal{I}-d^{-}(p,A)$
		\end{center}
	\end{thm}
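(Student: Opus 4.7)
The inequality should reduce almost immediately to the single-sequence fact $\mathcal{I}\text{-}\liminf x \leq \mathcal{I}\text{-}\limsup x$ (Demirci's theorem cited just above) by a one-line sandwich argument; the only issue is to make sure the supremum and infimum in the definitions of $\mathcal{I}\text{-}d^-(p,A)$ and $\mathcal{I}\text{-}d_-(p,A)$ are taken over a non-empty set, so that the choice of a witnessing sequence is legitimate.

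The plan is as follows. First, I would exhibit an explicit sequence $\{J_n\}_{n \in \mathbb{N}}$ of closed intervals about $p$ with $\mathscr{S}(J_n) \in \mathcal{F}(\mathcal{I})$, for instance $J_n = \big[p - \tfrac{1}{2n+1},\, p + \tfrac{1}{2n+1}\big]$, since then $\lambda(J_n) = \tfrac{2}{2n+1} < \tfrac{1}{n}$ for every $n$, so $\mathscr{S}(J_n) = \mathbb{N} \in \mathcal{F}(\mathcal{I})$. This verifies that the classes over which the sup and inf are taken in the definitions of $\mathcal{I}\text{-}d^-(p,A)$ and $\mathcal{I}\text{-}d_-(p,A)$ are non-empty, so both quantities are well-defined extended real numbers.

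Next, I would fix an arbitrary such sequence $\{J_n\}$ and set $x_n = \lambda(J_n \cap A)/\lambda(J_n)$, which is a sequence in $[0,1]$. By the previous theorem we have $\mathcal{I}\text{-}\liminf x_n \leq \mathcal{I}\text{-}\limsup x_n$. Since $\mathcal{I}\text{-}d_-(p,A)$ is defined as the infimum of $\mathcal{I}\text{-}\liminf$ taken over all admissible sequences, it is bounded above by $\mathcal{I}\text{-}\liminf x_n$; symmetrically, $\mathcal{I}\text{-}\limsup x_n$ is bounded above by $\mathcal{I}\text{-}d^-(p,A)$. Chaining these three inequalities yields
\begin{equation*}
\mathcal{I}\text{-}d_-(p,A) \;\leq\; \mathcal{I}\text{-}\liminf x_n \;\leq\; \mathcal{I}\text{-}\limsup x_n \;\leq\; \mathcal{I}\text{-}d^-(p,A),
\end{equation*}
which is the desired conclusion.

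There is no real obstacle here beyond careful bookkeeping: the content of the statement is just that the pointwise Demirci inequality survives passage to $\inf$ on the left and $\sup$ on the right because both optimizations range over the same family of sequences $\{J_n\}$. The proof should occupy only a few lines.
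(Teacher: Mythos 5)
Your proof is correct and complete: the chain $\mathcal{I}\text{-}d_-(p,A)\leq \mathcal{I}\text{-}\liminf x_n\leq \mathcal{I}\text{-}\limsup x_n\leq \mathcal{I}\text{-}d^-(p,A)$ for any fixed admissible sequence $\{J_n\}$, together with your explicit witness $J_n=[p-\tfrac{1}{2n+1},p+\tfrac{1}{2n+1}]$ showing the family is non-empty, is exactly the right argument. The paper itself states this result without proof (citing Banerjee--Debnath), and it rests on precisely the same reduction to Demirci's inequality $\mathcal{I}\text{-}\liminf \leq \mathcal{I}\text{-}\limsup$ over a common family of sequences, so there is nothing further to add.
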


  \begin{thm}\label{1}[\cite{Lahiri 2003}]If $x=\{x_n\}_{n \in \mathbb{N}}$ and $y=\{y_n\}_{n \in \mathbb{N}}$ are two $\mathcal{I}$-bounded real number sequences, then
	\end{thm}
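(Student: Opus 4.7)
The plan is to establish the standard subadditivity of $\mathcal{I}-\limsup$ and the corresponding superadditivity of $\mathcal{I}-\liminf$ for $\mathcal{I}$-bounded sequences, i.e.\
\[
\mathcal{I}-\limsup(x_n+y_n)\;\le\;\mathcal{I}-\limsup x_n+\mathcal{I}-\limsup y_n,
\]
together with the reverse inequality for $\mathcal{I}-\liminf$. Write $\alpha=\mathcal{I}-\limsup x_n$ and $\beta=\mathcal{I}-\limsup y_n$; $\mathcal{I}$-boundedness ensures $\alpha,\beta\in\mathbb{R}$ (any $\mathcal{I}$-bound $M$ on $x$ forces $B_{x_n}\subseteq(-\infty,M)$ and a symmetric argument makes $B_{x_n}$ non-empty), and the same reasoning shows $x+y$ is $\mathcal{I}$-bounded, so $\mathcal{I}-\limsup(x_n+y_n)$ is finite as well.

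The core observation, used twice, is a one-line implication: whenever $b>\sup B_{x_n}$, the number $b$ fails to lie in $B_{x_n}$, so by the very definition of $B_{x_n}$ one has $\{k:x_k>b\}\in\mathcal{I}$. Fix $\epsilon>0$ and apply this at $b=\alpha+\epsilon/2$ and at $b=\beta+\epsilon/2$. The union
\[
E_\epsilon\;=\;\{k:x_k>\alpha+\tfrac{\epsilon}{2}\}\cup\{k:y_k>\beta+\tfrac{\epsilon}{2}\}
\]
lies in $\mathcal{I}$ by the ideal axioms, and for every $k\in\mathbb{N}\setminus E_\epsilon$ we have $x_k+y_k\le\alpha+\beta+\epsilon$. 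Hence
\[
\{k:x_k+y_k>\alpha+\beta+\epsilon\}\subseteq E_\epsilon\in\mathcal{I},
\]
so $\alpha+\beta+\epsilon\notin B_{x_n+y_n}$ and therefore $\mathcal{I}-\limsup(x_n+y_n)\le\alpha+\beta+\epsilon$. Arbitrariness of $\epsilon$ gives the first inequality.

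For the $\liminf$ inequality I would either mirror this argument verbatim using the sets $A_{x_n},A_{y_n}$ (noting that $a<\inf A_{x_n}$ forces $\{k:x_k<a\}\in\mathcal{I}$, by an analogous upward-closedness of $A_{x_n}$), or exploit the duality $\mathcal{I}-\liminf z_n=-\,\mathcal{I}-\limsup(-z_n)$ to reduce it to the case already handled by feeding in $-x$ and $-y$.

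The argument is really just bookkeeping on the ideal $\mathcal{I}$; no measure theory is involved. The one point that needs care is the $\epsilon$-split: one cannot control $\{k:x_k+y_k>c\}$ directly from $B_{x_n}$ or $B_{y_n}$ alone, so each exceptional set must be pushed into $\mathcal{I}$ \emph{separately} with its own half of $\epsilon$ before the closure of $\mathcal{I}$ under finite unions is invoked; this is where the proof of subadditivity for ordinary $\limsup$ translates cleanly to the $\mathcal{I}$-setting.
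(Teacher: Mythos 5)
The paper does not actually prove this statement: Theorem \ref{1} is quoted verbatim from Lahiri and Das \cite{Lahiri 2003} as a known result (the authors only reprove the facts they cite from \cite{banerjee 4}), so there is no in-paper argument to compare yours against. Judged on its own, your proof is correct and is the standard one. The finiteness of $\alpha$ and $\beta$ from $\mathcal{I}$-boundedness, the observation that $b>\sup B_{x_n}$ forces $\{k:x_k>b\}\in\mathcal{I}$, the $\epsilon/2$-split with the closure of $\mathcal{I}$ under finite unions, and the reduction of the $\liminf$ half to the $\limsup$ half via $\mathcal{I}-\liminf z=-\,\mathcal{I}-\limsup(-z)$ (which the paper itself records as a proposition from \cite{banerjee 4}) are all exactly what is needed.

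One step you pass over silently: from \enquote{$\alpha+\beta+\epsilon\notin B_{x_n+y_n}$} you conclude \enquote{$\mathcal{I}-\limsup(x_n+y_n)\le\alpha+\beta+\epsilon$}. That inference is not automatic from the definition of supremum; it uses the fact that $B_z$ is a down-set, i.e.\ if $b\in B_z$ and $b'<b$ then $\{k:z_k>b'\}\supseteq\{k:z_k>b\}\notin\mathcal{I}$, so $b'\in B_z$. Hence a single point outside $B_z$ bounds $\sup B_z$ from above. This is a one-line fix, and the analogous upward-closedness of $A_z$ (which you do mention) is needed for the mirrored $\liminf$ argument. With that remark added, the proof is complete.
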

	
	\begin{enumerate}[label=(\roman*)]
		\item $\mathcal{I}-\limsup (x+y) \leq \mathcal{I}-\limsup x +\mathcal{I}-\limsup y$
		\item $\mathcal{I}-\liminf (x+y) \geq \mathcal{I}-\liminf x + \mathcal{I}-\liminf y$
	\end{enumerate}

	\begin{ppsn}\label{2}[\cite{banerjee 4}] Given an $\mathcal{I}$-bounded real sequence $\{x_n\}_{n \in \mathbb{N}}$ and a real number $c$, 
	\end{ppsn}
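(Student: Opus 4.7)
Based on the immediate context --- Theorem \ref{1} just recorded the sub/superadditivity of $\mathcal{I}-\limsup$ and $\mathcal{I}-\liminf$ under addition --- I take the intended conclusion to be the companion scalar-multiplication identities: $\mathcal{I}-\limsup(cx_n) = c \cdot \mathcal{I}-\limsup x_n$ and $\mathcal{I}-\liminf(cx_n) = c \cdot \mathcal{I}-\liminf x_n$ whenever $c \geq 0$, while the two quantities swap roles for $c < 0$. The plan is to argue directly from the definitions of $B_{x_n}$ and $A_{x_n}$, splitting on the sign of $c$.

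The case $c = 0$ is immediate, since $\{cx_n\}$ is the zero sequence and the relevant sets reduce to $(-\infty,0)$ and $(0,\infty)$. For $c > 0$, the key observation is that $cx_k > b$ if and only if $x_k > b/c$, hence $B_{cx_n} = c \cdot B_{x_n}$ as subsets of $\mathbb{R}$; since multiplication by a positive constant commutes with $\sup$, this gives the $\mathcal{I}-\limsup$ identity, and the analogous argument with $A_{cx_n} = c \cdot A_{x_n}$ gives the corresponding $\mathcal{I}-\liminf$ identity. For $c < 0$ the roles interchange: now $cx_k > b$ if and only if $x_k < b/c$, so $B_{cx_n} = c \cdot A_{x_n}$, and because scaling by a negative number reverses order, $\sup(c \cdot A_{x_n}) = c \cdot \inf A_{x_n}$, yielding $\mathcal{I}-\limsup(cx_n) = c \cdot \mathcal{I}-\liminf x_n$. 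The dual identity for $\mathcal{I}-\liminf$ follows symmetrically.

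The only genuine obstacle is verifying that the relevant suprema and infima are finite rather than $\pm \infty$, and this is precisely where the $\mathcal{I}$-boundedness hypothesis enters: if $\{k : |x_k| > M\} \in \mathcal{I}$ then every element of $B_{x_n}$ is at most $M$ and every element of $A_{x_n}$ is at least $-M$, so both sets are non-empty bounded subsets of $\mathbb{R}$ and every $\sup$ or $\inf$ appearing in the proof is a real number. With that in place the argument is just a careful tracking of how $\sup$ and $\inf$ interact with multiplication by a signed constant, with the sign reversal in the $c<0$ case being the only subtlety.
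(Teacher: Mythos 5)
The difficulty here is not with your computations but with the statement you reconstructed: Proposition \ref{2} is not about the scaled sequence $\{cx_n\}$ but about the translated sequence $\{c+x_n\}$. Its conclusion (the two enumerated items attached to the proposition) reads $\mathcal{I}-\liminf (c+x_n)=c+\mathcal{I}-\liminf x_n$ and $\mathcal{I}-\limsup (c+x_n)=c+\mathcal{I}-\limsup x_n$, and this is precisely the form in which it is invoked later, in the proof of Theorem \ref{7}, to pull the constant $-1$ out of $\mathcal{I}-\liminf (x_n+y_n-1)$. Your scalar-multiplication identities are true and correctly argued --- the identifications $B_{cx_n}=c\cdot B_{x_n}$ for $c>0$ and $B_{cx_n}=c\cdot A_{x_n}$ for $c<0$, the sign-dependent behaviour of $\sup$ under scaling, and the use of $\mathcal{I}$-boundedness to ensure $B_{x_n}$ and $A_{x_n}$ are nonempty and bounded are all sound --- but they establish a companion result, not the proposition itself, and they would not justify the step where the proposition is actually used.

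The repair is immediate, and your method becomes simpler for the correct statement: since $c+x_k>b$ if and only if $x_k>b-c$, one gets $B_{c+x_n}=c+B_{x_n}$ and likewise $A_{c+x_n}=c+A_{x_n}$, and translation commutes with $\sup$ and $\inf$ unconditionally, so no case split on the sign of $c$ is needed; $\mathcal{I}$-boundedness again serves only to keep the relevant suprema and infima finite. (Note that the paper states this proposition without proof, citing \cite{banerjee 4}, so there is no in-text argument to compare against; the adjacent result on $-x$ in the paper handles negation separately, and even combined with the translation identity it does not yield your scaling identity, which is genuinely extra.)
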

	\begin{center}
		\begin{enumerate}[label=(\roman*)]
			\item $\mathcal{I}-\liminf (c+x_n)=c+\mathcal{I}-\liminf x_n$
			\item $\mathcal{I}-\limsup (c+x_n)=c+\mathcal{I}-\limsup x_n$
		\end{enumerate}
	\end{center}

	\begin{ppsn}[\cite{banerjee 4}]
		{For any real sequence $x=\{x_n\}_{n \in \mathbb{N}}$,
		}
	\end{ppsn}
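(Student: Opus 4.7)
Since the previous proposition handled additive translation of a sequence, the natural next property, and the one suggested by the layout of the definitions, is the behaviour of $\mathcal{I}-\limsup$ and $\mathcal{I}-\liminf$ under negation, namely the identities $\mathcal{I}-\limsup(-x_n)=-\mathcal{I}-\liminf(x_n)$ and $\mathcal{I}-\liminf(-x_n)=-\mathcal{I}-\limsup(x_n)$. My plan is to work directly with the auxiliary sets $B_x$ and $A_x$ appearing in the definitions, rather than attempting to push formal $\sup$/$\inf$ manipulations through the ideal by hand.

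The crux is an explicit description of these sets for the negated sequence. From the definition, $B_{-x}=\{b\in\mathbb{R}:\{k:-x_k>b\}\notin\mathcal{I}\}$; since $-x_k>b$ is equivalent to $x_k<-b$, we have $\{k:-x_k>b\}=\{k:x_k<-b\}$. Substituting $a=-b$ identifies $B_{-x}$ with $\{-a:a\in A_x\}$, so that $B_{-x}=-A_x$. Taking suprema then gives $\sup B_{-x}=-\inf A_x$, which is exactly the first identity. A symmetric computation, using $-x_k<b\Leftrightarrow x_k>-b$, yields $A_{-x}=-B_x$ and hence the second identity. Once the set-level rewriting is done, each of the two asserted equalities is then a single line.

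The main obstacle, as often with the Demirci-style definitions, is the handling of the degenerate cases where $B_x=\emptyset$ (so $\mathcal{I}-\limsup x=-\infty$) or $A_x=\emptyset$ (so $\mathcal{I}-\liminf x=+\infty$). Once the set-level identity $B_{-x}=-A_x$ is in hand, the verification reduces to a short case analysis together with the conventions $\sup\emptyset=-\infty$ and $\inf\emptyset=+\infty$ from the definition; these are fixed precisely so that the equalities remain valid when interpreted with $-(+\infty)=-\infty$ and $-(-\infty)=+\infty$. Beyond this bookkeeping I do not expect any genuine technical difficulty, and in particular no appeal to the $\mathcal{I}$-boundedness hypothesis used in Theorem \ref{1} is needed here.
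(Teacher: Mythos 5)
Your proposal is correct. Note that the paper itself gives no proof of this proposition --- it is merely stated as a result quoted from \cite{banerjee 4} --- so there is no in-paper argument to compare against; your route via the set identities $B_{-x}=-A_{x}$ and $A_{-x}=-B_{x}$ (from $-x_{k}>b\Leftrightarrow x_{k}<-b$ and $-x_{k}<a\Leftrightarrow x_{k}>-a$), followed by $\sup(-S)=-\inf S$ in the extended reals and the stated conventions for the empty cases, is the standard and complete argument. You are also right that, unlike Theorem \ref{1}, no $\mathcal{I}$-boundedness hypothesis is needed, which matches the proposition's phrasing ``for any real sequence.''
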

	\begin{enumerate}[label=(\roman*)]
		\item $\mathcal{I}-\limsup (-x)=-(\mathcal{I}-\liminf x)$
		\item $\mathcal{I}-\liminf (-x)=-(\mathcal{I}-\limsup x)$
	\end{enumerate}

	\begin{lma}\label{3}[\cite{banerjee 4}]
		For any disjoint Lebesgue measurable subsets $A$ and $B$ of $\mathbb{R}$ and any point $p \in \mathbb{R}$ if $\mathcal{I}-d (p,A)$ and $\mathcal{I}-d (p,B)$ exist, then $\mathcal{I}-d (p,A \cup B)$ exists and $\mathcal{I}-d (p,A \cup B)=\mathcal{I}-d (p,A)+\mathcal{I}-d (p,B)$. 
	\end{lma}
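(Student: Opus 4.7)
The plan is to reduce the claim to the subadditivity inequalities of Theorem~\ref{1} after first upgrading the hypothesis $\mathcal{I}-d^-(p,\cdot)=\mathcal{I}-d_-(p,\cdot)$ into the statement that along \emph{every} admissible sequence of intervals the $\mathcal{I}$-limit of the corresponding ratio sequence exists and equals the common value.

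First I would fix an arbitrary sequence $\{J_n\}_{n \in \mathbb{N}}$ of closed intervals about $p$ with $\mathscr{S}(J_n)\in\mathcal{F}(\mathcal{I})$ and set
\[
x_n = \frac{\lambda(J_n \cap A)}{\lambda(J_n)}, \qquad y_n = \frac{\lambda(J_n \cap B)}{\lambda(J_n)}, \qquad z_n = \frac{\lambda(J_n \cap (A \cup B))}{\lambda(J_n)}.
\]
Disjointness of $A$ and $B$ gives $z_n = x_n + y_n$, while all three sequences lie in $[0,1]$ and are therefore $\mathcal{I}$-bounded. Write $\alpha = \mathcal{I}-d(p,A)$ and $\beta = \mathcal{I}-d(p,B)$.

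The key observation is the following. Since $\alpha$ is simultaneously the supremum of $\mathcal{I}-\limsup x_n$ and the infimum of $\mathcal{I}-\liminf x_n$ taken over all admissible sequences, for the particular $\{J_n\}$ chosen above one has $\mathcal{I}-\limsup x_n \le \alpha$ and $\mathcal{I}-\liminf x_n \ge \alpha$; combining these with the general inequality $\mathcal{I}-\liminf x_n \le \mathcal{I}-\limsup x_n$ forces $\mathcal{I}-\liminf x_n = \mathcal{I}-\limsup x_n = \alpha$. An identical argument yields $\mathcal{I}-\liminf y_n = \mathcal{I}-\limsup y_n = \beta$. I expect this conversion of a sup/inf-over-sequences statement into a per-sequence statement to be the main obstacle of the proof; everything that follows is mechanical.

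With these pointwise equalities in hand, Theorem~\ref{1} applied to the $\mathcal{I}$-bounded sequences $x_n$ and $y_n$ gives
\[
\mathcal{I}-\limsup z_n \;\le\; \mathcal{I}-\limsup x_n + \mathcal{I}-\limsup y_n \;=\; \alpha+\beta,
\]
\[
\mathcal{I}-\liminf z_n \;\ge\; \mathcal{I}-\liminf x_n + \mathcal{I}-\liminf y_n \;=\; \alpha+\beta,
\]
and applying the inequality $\mathcal{I}-\liminf z_n \le \mathcal{I}-\limsup z_n$ once more pins down $\mathcal{I}-\liminf z_n = \mathcal{I}-\limsup z_n = \alpha+\beta$ along every admissible $\{J_n\}$. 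Taking supremum and infimum over all such sequences then yields $\mathcal{I}-d^-(p,A\cup B) = \mathcal{I}-d_-(p,A\cup B) = \alpha+\beta$, which is exactly the existence of $\mathcal{I}-d(p,A\cup B)$ together with the claimed additivity.
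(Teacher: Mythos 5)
Your proof is correct. The paper states this lemma without reproducing its proof (it is quoted from \cite{banerjee 4}), but your argument is precisely the natural one given the toolkit the paper assembles: the pivotal step --- observing that the coincidence $\mathcal{I}-d^-(p,A)=\mathcal{I}-d_-(p,A)=\alpha$ forces $\mathcal{I}-\liminf x_n=\mathcal{I}-\limsup x_n=\alpha$ along \emph{every} admissible sequence, because the upper density is a supremum of $\mathcal{I}$-limsups and the lower density an infimum of $\mathcal{I}$-liminfs over the same family of sequences --- is identified and handled correctly, and the remainder follows from finite additivity of $\lambda$ on disjoint measurable sets together with Theorem~\ref{1} and the inequality $\mathcal{I}-\liminf\leq\mathcal{I}-\limsup$.
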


	\begin{lma}[\cite{banerjee 4}]
		If $\mathcal{I}-d(p,A)$ and $\mathcal{I}-d(p,B)$ exist and $A \subset B$. Then $\mathcal{I}-d(p,B \setminus A)$ exists and $\mathcal{I}-d(p,B \setminus A)=\mathcal{I}-d(p,B)-\mathcal{I}-d(p,A)$
	\end{lma}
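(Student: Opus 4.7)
The plan is to exploit the disjoint decomposition $B=A\cup(B\setminus A)$: once existence of $\mathcal{I}-d(p,B\setminus A)$ is established, Lemma~\ref{3} applied to this disjoint union immediately yields
$$\mathcal{I}-d(p,B)=\mathcal{I}-d(p,A)+\mathcal{I}-d(p,B\setminus A),$$
which rearranges to the claimed identity. So the real work is to show that $\mathcal{I}-d(p,B\setminus A)$ exists.

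The first observation I would record is a general one: whenever $\mathcal{I}-d(p,E)$ exists, then for every admissible sequence $\{J_n\}$ (i.e.\ with $\mathscr{S}(J_n)\in\mathcal{F}(\mathcal{I})$) the sandwich
$$\mathcal{I}-d_-(p,E)\;\le\;\mathcal{I}-\liminf x_n\;\le\;\mathcal{I}-\limsup x_n\;\le\;\mathcal{I}-d^-(p,E)$$
forces $\mathcal{I}-\liminf x_n = \mathcal{I}-\limsup x_n = \mathcal{I}-d(p,E)$, where as usual $x_n=\lambda(J_n\cap E)/\lambda(J_n)$. This reduces the existence question for $B\setminus A$ to a sequence-by-sequence computation.

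Next, I fix any admissible $\{J_n\}$ about $p$ and set
$$y_n=\frac{\lambda(J_n\cap B)}{\lambda(J_n)},\qquad z_n=\frac{\lambda(J_n\cap A)}{\lambda(J_n)},\qquad w_n=\frac{\lambda(J_n\cap(B\setminus A))}{\lambda(J_n)}.$$
Since $A\subset B$, additivity of $\lambda$ on the disjoint decomposition $J_n\cap B=(J_n\cap A)\cup(J_n\cap(B\setminus A))$ gives $w_n=y_n-z_n$. All three sequences lie in $[0,1]$ and are therefore $\mathcal{I}$-bounded, so Theorem~\ref{1} applies to $w_n=y_n+(-z_n)$. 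Combining its subadditivity of $\mathcal{I}-\limsup$ with the negation proposition ($\mathcal{I}-\limsup(-z_n)=-\mathcal{I}-\liminf z_n$) and the observation above, I obtain
$$\mathcal{I}-\limsup w_n\;\le\;\mathcal{I}-\limsup y_n-\mathcal{I}-\liminf z_n\;=\;\mathcal{I}-d(p,B)-\mathcal{I}-d(p,A),$$
and symmetrically $\mathcal{I}-\liminf w_n\;\ge\;\mathcal{I}-d(p,B)-\mathcal{I}-d(p,A)$. Since $\mathcal{I}-\liminf w_n\le\mathcal{I}-\limsup w_n$, equality holds throughout and both quantities equal $\mathcal{I}-d(p,B)-\mathcal{I}-d(p,A)$ regardless of the admissible sequence chosen. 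Taking sup and inf over admissible sequences, one concludes $\mathcal{I}-d^-(p,B\setminus A)=\mathcal{I}-d_-(p,B\setminus A)=\mathcal{I}-d(p,B)-\mathcal{I}-d(p,A)$, giving both existence and the value.

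The only subtlety is the conversion of a difference into a sum so that Theorem~\ref{1} is applicable; the negation proposition handles it cleanly. Everything else is a routine assembly of the $\mathcal{I}$-limit machinery developed earlier in the section; alternatively, after existence is in hand, an even shorter finish is a one-line appeal to Lemma~\ref{3} on the disjoint union $B=A\cup(B\setminus A)$.
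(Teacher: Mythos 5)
Your argument is correct: the key reduction (existence of $\mathcal{I}-d(p,E)$ forces every admissible sequence to have $\mathcal{I}-\liminf$ and $\mathcal{I}-\limsup$ both equal to $\mathcal{I}-d(p,E)$, via Demirci's inequality and the definitions of the sup/inf) is sound, and the subsequent sandwich on $w_n=y_n-z_n$ using Theorem \ref{1}, Proposition \ref{2} and the negation identity $\mathcal{I}-\limsup(-z)=-\,\mathcal{I}-\liminf z$ correctly yields both existence and the value of $\mathcal{I}-d(p,B\setminus A)$. The paper itself states this lemma without proof (citing \cite{banerjee 4}), but your route is precisely the one its own machinery is set up for and mirrors the pattern used in the proof of Theorem \ref{7}, so there is nothing to flag beyond the harmless redundancy of the closing appeal to Lemma \ref{3}.
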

	
	\begin{thm}[\cite{banerjee 4}]
		For any measurable set $H$, $\mathcal{I}$-density of $H$ at a point $p$ exists if and only if $\mathcal{I}-d^{-}(p,H)+\mathcal{I}-d^{-}(p,H^{c})=1$.
	\end{thm}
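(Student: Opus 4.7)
The plan is to exploit the complementarity of $H$ and $H^{c}$ inside each interval and transfer this through the $\mathcal{I}$-limit operations in order to relate $\mathcal{I}-d^{-}(p,H^{c})$ to $\mathcal{I}-d_{-}(p,H)$; once that duality is in hand, the equivalence falls out immediately from the very definition of the existence of $\mathcal{I}$-density.

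First I would fix an arbitrary admissible sequence $\{J_n\}_{n \in \mathbb{N}}$ of closed intervals about $p$, i.e. one with $\mathscr{S}(J_n)\in \mathcal{F}(\mathcal{I})$, and set
$$x_n = \frac{\lambda(J_n \cap H)}{\lambda(J_n)}, \qquad y_n = \frac{\lambda(J_n \cap H^{c})}{\lambda(J_n)}.$$
Since $H$ and $H^{c}$ partition $\mathbb{R}$, we have $y_n = 1-x_n$ whenever $\lambda(J_n) > 0$, and both sequences lie in $[0,1]$, hence are $\mathcal{I}$-bounded.

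Next, using the already cited identities $\mathcal{I}-\limsup(-x_n) = -\mathcal{I}-\liminf x_n$ and $\mathcal{I}-\limsup(c+x_n) = c + \mathcal{I}-\limsup x_n$ from the preceding propositions, I would deduce
$$\mathcal{I}-\limsup y_n = \mathcal{I}-\limsup (1-x_n) = 1 - \mathcal{I}-\liminf x_n.$$
Taking the supremum on both sides over the family of all admissible $\{J_n\}$ about $p$, and noting that $\sup_{\alpha}(1-f(\alpha)) = 1 - \inf_{\alpha} f(\alpha)$, this yields the key duality
$$\mathcal{I}-d^{-}(p,H^{c}) = 1 - \mathcal{I}-d_{-}(p,H).$$

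Finally, substituting this identity gives
$$\mathcal{I}-d^{-}(p,H) + \mathcal{I}-d^{-}(p,H^{c}) = 1 \iff \mathcal{I}-d^{-}(p,H) = \mathcal{I}-d_{-}(p,H),$$
and the right-hand equality is precisely the definition of $\mathcal{I}-d(p,H)$ existing. The only nontrivial step is the duality identity above; once one is careful with the $\sup$/$\inf$ interchange induced by multiplication by $-1$, everything else is routine bookkeeping with the $\mathcal{I}$-limsup/liminf properties already collected in the excerpt.
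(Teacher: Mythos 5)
Your proof is correct: the duality $\mathcal{I}-d^{-}(p,H^{c}) = 1 - \mathcal{I}-d_{-}(p,H)$, obtained from $y_n = 1 - x_n$ together with the stated propositions $\mathcal{I}-\limsup(c+x_n)=c+\mathcal{I}-\limsup x_n$ and $\mathcal{I}-\limsup(-x_n)=-\mathcal{I}-\liminf x_n$ and the elementary identity $\sup_\alpha(1-f(\alpha))=1-\inf_\alpha f(\alpha)$ over the (common) family of admissible interval sequences, immediately reduces the claimed equivalence to the definition of existence of $\mathcal{I}-d(p,H)$. This is precisely the argument the surrounding lemmas are assembled for, so it matches the intended (cited) proof.
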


	Let $H \subset \mathbb{R}$ be a measurable set. Let us denote the set of points of $\mathbb{R}$ at which $H$ has lower $\mathcal{I}$-density 1 by $\Theta_\mathcal{I}(H)$. i.e.
 \begin{equation*}
   \Theta_\mathcal{I}(H)=\{x \in \mathbb{R}: \mathcal{I}-d_{-}(x,H)=1 \}
\end{equation*}

Next we state the Lebesgue $\mathcal{I}$-density theorem which is as follows.

	\begin{thm}\label{4}[\cite{banerjee 4}]
		For any measurable set $H \subset \mathbb{R},$
		\begin{center}
			$\lambda(H \triangle \Theta_\mathcal{I}(H))=0$ 
		\end{center}
		
	\end{thm}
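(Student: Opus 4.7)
The overall strategy is to reduce the claim to the classical Lebesgue density theorem, which states that $\lambda(H \triangle \Phi(H)) = 0$, where $\Phi(H)$ denotes the set of ordinary Lebesgue density points of $H$. I will split $\lambda(H \triangle \Theta_\mathcal{I}(H)) = 0$ into two estimates: first $\lambda(H \setminus \Theta_\mathcal{I}(H)) = 0$, via the inclusion $\Phi(H) \subset \Theta_\mathcal{I}(H)$, and then $\lambda(\Theta_\mathcal{I}(H) \setminus H) = 0$, by applying that first estimate to $H^{c}$ together with the fact that a point cannot be an $\mathcal{I}$-density point of both a set and its complement.

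For the inclusion $\Phi(H) \subset \Theta_\mathcal{I}(H)$, fix $p \in \Phi(H)$ and let $\{J_n\}_{n \in \mathbb{N}}$ be any sequence of closed intervals about $p$ with $\mathscr{S}(J_n) \in \mathcal{F}(\mathcal{I})$. Enumerating $\mathscr{S}(J_n) = \{k_1 < k_2 < \cdots\}$, the subsequence $\{J_{k_i}\}_{i \in \mathbb{N}}$ satisfies $\lambda(J_{k_i}) < 1/k_i \to 0$, so classical density at $p$ forces $x_{k_i} \to 1$, where $x_n := \lambda(J_n \cap H)/\lambda(J_n)$. Hence for each $\epsilon > 0$ the set $\{n : x_n < 1-\epsilon\}$ is contained in $(\mathbb{N} \setminus \mathscr{S}(J_n)) \cup F$ for some finite $F$; by admissibility of $\mathcal{I}$, this union lies in $\mathcal{I}$. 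Combined with $0 \leq x_n \leq 1$, a quick inspection of $A_{x_n}$ and $B_{x_n}$ identifies each as an interval with endpoint $1$, yielding $\mathcal{I}-\liminf x_n = \mathcal{I}-\limsup x_n = 1$. Taking infimum (resp.\ supremum) over all such sequences gives $\mathcal{I}-d(p,H) = 1$, so $p \in \Theta_\mathcal{I}(H)$. The classical density theorem then delivers $\lambda(H \setminus \Theta_\mathcal{I}(H)) \leq \lambda(H \setminus \Phi(H)) = 0$.

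For the reverse direction, first note that $\mathcal{I}-d(p,\mathbb{R}) = 1$ trivially, since $\lambda(J_n \cap \mathbb{R})/\lambda(J_n) \equiv 1$. If some point $p$ belonged to both $\Theta_\mathcal{I}(H)$ and $\Theta_\mathcal{I}(H^{c})$, then Lemma \ref{3} applied to the disjoint pair $(H, H^{c})$ would force $\mathcal{I}-d(p,\mathbb{R}) = 1 + 1 = 2$, a contradiction. Hence $\Theta_\mathcal{I}(H) \cap \Theta_\mathcal{I}(H^{c}) = \emptyset$, and so $\Theta_\mathcal{I}(H) \setminus H = \Theta_\mathcal{I}(H) \cap H^{c} \subset H^{c} \setminus \Theta_\mathcal{I}(H^{c})$. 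Applying the first half of the argument to $H^{c}$ shows this last set has measure zero, and combining the two null sets completes the proof.

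The main obstacle is the first step: converting classical convergence $x_{k_i} \to 1$ along an index set in $\mathcal{F}(\mathcal{I})$ into both $\mathcal{I}$-limit equalities simultaneously, despite the values of $x_n$ on the complementary indices being arbitrary. The key tools are admissibility of $\mathcal{I}$ (which lets finite exceptional sets be absorbed) and the a priori bound $x_n \in [0,1]$, which prevents $B_{x_n}$ from ever extending past $1$ regardless of the uncontrolled values. Once $\Phi(H) \subset \Theta_\mathcal{I}(H)$ is in hand, the remainder is bookkeeping with Lemma \ref{3} and the classical density theorem.
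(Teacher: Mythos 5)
Your argument is correct. Note first that the paper does not actually prove Theorem \ref{4}; it is stated without proof and cited from the earlier preprint, so there is no in-paper argument to compare against. Your reduction to the classical Lebesgue density theorem is a clean and natural route: the inclusion $\Phi(H)\subset\Theta_{\mathcal{I}}(H)$ is established correctly, since along any admissible sequence the indices in $\mathscr{S}(J_n)$ satisfy $\lambda(J_{k_i})<1/k_i\to 0$, classical density gives $x_{k_i}\to 1$ there, and admissibility of $\mathcal{I}$ absorbs the finitely many bad indices while the bound $0\le x_n\le 1$ pins down $A_{x_n}=(1,\infty)$ and $B_{x_n}=(-\infty,1)$; the complementary estimate then follows from disjointness of $\Theta_{\mathcal{I}}(H)$ and $\Theta_{\mathcal{I}}(H^{c})$ together with the first half applied to $H^{c}$, and completeness of $\lambda$ handles measurability of the difference sets. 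Two small points deserve a sentence each in a polished write-up. First, since $\Theta_{\mathcal{I}}$ is defined via the lower $\mathcal{I}$-density only, your appeal to Lemma \ref{3} at a point $p\in\Theta_{\mathcal{I}}(H)\cap\Theta_{\mathcal{I}}(H^{c})$ requires that $\mathcal{I}\text{-}d(p,H)$ and $\mathcal{I}\text{-}d(p,H^{c})$ actually exist; this is immediate because $x_n\in[0,1]$ forces $\mathcal{I}\text{-}d^{-}(p,\cdot)\le 1$ for every measurable set, so lower density $1$ already implies upper density $1$, but it should be said. Second, since the intervals $J_n$ are merely required to contain $p$ rather than be centred at it, $\Phi(H)$ must be taken as the set of \emph{non-centred} density points; the standard comparison $\lambda(J\setminus H)/\lambda(J)\le 2\,\lambda(I\setminus H)/\lambda(I)$ with $I=[p-\lambda(J),\,p+\lambda(J)]$ shows this set still has full measure in $H$, so the classical theorem applies in the form you need.
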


	The statement of this theorem may also be read as almost all points of an arbitrary measurable set $H$ are points of $\mathcal{I}$-density for $H$ and moreover we can conclude that $\Theta_{\mathcal{I}}(H)$ is measurable.
	
	\section{$\mathcal{I}$-density function}\label{section 3}

The function $\Theta_\mathcal{I}(.): \mathcal{L} \rightarrow 2^\mathbb{R}$ is called $\mathcal{I}$-density function since $\Theta_\mathcal{I}$ takes measurable set as input and returns the set of all points that have $\mathcal{I}$-density $1$ in $H$. Now we explore some properties of $\mathcal{I}$-density function.

\begin{ppsn}\label{5}
If $A$ and $B$ are measurable sets and $\lambda(A \triangle B)=0$ then $\Theta_\mathcal{I}(A)=\Theta_\mathcal{I}(B)$. 
\end{ppsn}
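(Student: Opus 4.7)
The plan is to reduce the proposition to the pointwise statement: for every $p \in \mathbb{R}$, the ratio sequences defining the $\mathcal{I}$-density of $A$ and $B$ at $p$ are \emph{identical}, so in particular $\mathcal{I}-d_{-}(p,A) = \mathcal{I}-d_{-}(p,B)$, which forces $p \in \Theta_{\mathcal{I}}(A) \Longleftrightarrow p \in \Theta_{\mathcal{I}}(B)$.

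First I would establish the following measure-theoretic fact: if $\lambda(A \triangle B) = 0$, then for every Lebesgue measurable set $J$ (in particular every closed interval) we have $\lambda(J \cap A) = \lambda(J \cap B)$. The justification is routine: write $J \cap A = (J \cap A \cap B) \cup (J \cap (A \setminus B))$ and $J \cap B = (J \cap A \cap B) \cup (J \cap (B \setminus A))$; since $A \setminus B \subset A \triangle B$ and $B \setminus A \subset A \triangle B$ both have measure zero, the ``extra'' pieces vanish, leaving $\lambda(J \cap A) = \lambda(J \cap A \cap B) = \lambda(J \cap B)$.

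Next, fix an arbitrary $p \in \mathbb{R}$ and any sequence of closed intervals $\{J_n\}_{n \in \mathbb{N}}$ about $p$ with $\mathscr{S}(J_n) \in \mathcal{F}(\mathcal{I})$. Define
\begin{equation*}
x_n = \frac{\lambda(J_n \cap A)}{\lambda(J_n)}, \qquad y_n = \frac{\lambda(J_n \cap B)}{\lambda(J_n)}.
\end{equation*}
By the preceding paragraph, $x_n = y_n$ for every $n$ in $\mathscr{S}(J_n)$ (where the denominators are positive), and on the complement the ratios are irrelevant since that complement lies in $\mathcal{I}$. Consequently the sets $A_{x_n}, B_{x_n}$ coincide with $A_{y_n}, B_{y_n}$ respectively, and hence $\mathcal{I}-\liminf x_n = \mathcal{I}-\liminf y_n$. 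Taking the infimum over all admissible interval sequences $\{J_n\}$ about $p$ yields $\mathcal{I}-d_{-}(p,A) = \mathcal{I}-d_{-}(p,B)$.

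Finally, since this equality of lower $\mathcal{I}$-densities holds at every point $p$, we have $\mathcal{I}-d_{-}(p,A) = 1$ if and only if $\mathcal{I}-d_{-}(p,B) = 1$, i.e.\ $\Theta_{\mathcal{I}}(A) = \Theta_{\mathcal{I}}(B)$. There is no genuine obstacle here; the only mildly delicate point is remembering that the $\mathcal{I}$-density is defined through \emph{all} sequences $\{J_n\}$ with $\mathscr{S}(J_n) \in \mathcal{F}(\mathcal{I})$, so one must observe that the map $\{J_n\} \mapsto \{J_n\}$ gives a bijection between the admissible families for $A$ and for $B$ (trivially, since the admissibility depends only on the intervals, not on the set).
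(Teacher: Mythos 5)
Your proposal is correct and follows essentially the same route as the paper: both reduce the claim to the observation that $\lambda(J\cap A)=\lambda(J\cap B)$ for every interval $J$ when $\lambda(A\triangle B)=0$, and then note that the defining ratio sequences (hence the $\mathcal{I}$-liminfs and the infimum over admissible interval sequences) coincide, so the lower $\mathcal{I}$-densities agree at every point. Your explicit remark that the admissible families of interval sequences do not depend on the set is a small point the paper leaves implicit, but it is the same argument.
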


\begin{proof}
Let $\{J_n\}_{n \in \mathbb{N}}$ be any sequence of closed interval in $\mathbb{R}$. If $\lambda(A \triangle B)=0$ then we claim that $\lambda(A \cap J_n)=\lambda(B \cap J_n)$ for each interval $J_n \subset \mathbb{R}$. Now
      \begin{equation*}
          \begin{split}
              A &=A \cap (B \cup B^{c})\\
              &= (A \cap B) \cup (A \cap B^{c})\\
              &= (A \cap B) \cup (A \setminus B)\\
              &\subset B \cup (A \triangle B)
          \end{split}
      \end{equation*}
      
      For any $J_n \subset \mathbb{R}$ we have
      \begin{equation*}
          \begin{split}
              \lambda(A \cap J_n) & \leq \lambda ((B \cup (A \triangle B)) \cap J_n)\\
              & \leq \lambda ((A \triangle B)) \cap J_n)+ \lambda (B \cap J_n)\\
              &= \lambda(B \cap J_n) \quad \mbox{since} \ \lambda ((A \triangle B)) \cap J_n) \leq \lambda(A \triangle B)=0 
          \end{split}
      \end{equation*}
      Similarly, $\lambda (B \cap J_n) \leq \lambda(A \cap J_n)$ for any interval $J_n \subset \mathbb{R}$. So we have $\lambda(A \cap J_n)=\lambda(B \cap J_n)$ for any interval $J_n \subset \mathbb{R}$. Now we are to show $\Theta_\mathcal{I}(A)=\Theta_\mathcal{I}(B)$. 

      Let $x \in \Theta_\mathcal{I}(A)$. So, $\mathcal{I}-d_{-}(x,A)=1$. Now,
		\begin{align*}
			\mathcal{I}-d_-(x,A) &=\inf \{\mathcal{I}-\liminf \frac{\lambda(A \cap J_n)}{\lambda(J_n)} : \{J_n\}_{n \in \mathbb{N}} \ \mbox{such that} \ \mathscr{S}(J_n) \in \mathcal{F}(\mathcal{I})\}\\ 
			&= \inf \{\mathcal{I}-\liminf \frac{\lambda(B \cap J_n)}{\lambda(J_n)} : \{J_n\}_{n \in \mathbb{N}} \ \mbox{such that} \ \mathscr{S}(J_n) \in \mathcal{F}(\mathcal{I})\} \\
			&= \mathcal{I}-d_-(x,B)
		\end{align*}
      
      So, $\mathcal{I}-d_-(x,B)=1$ and hence $x \in \Theta_\mathcal{I}(B)$. So, $\Theta_\mathcal{I}(A) \subseteq \Theta_\mathcal{I}(B)$. Similarly, $\Theta_\mathcal{I}(B) \subseteq \Theta_\mathcal{I}(A)$. Thus, $\Theta_\mathcal{I}(A)=\Theta_\mathcal{I}(B)$. This completes the proof.
      
\end{proof}

\begin{crlre}
    Let $A \subseteq \mathbb{R}$ be measurable then $\Theta_{\mathcal{I}}(A)=\Theta_{\mathcal{I}}(\Theta_{\mathcal{I}}(A))$ i.e. $\mathcal{I}$-density function is idempotent.
\end{crlre}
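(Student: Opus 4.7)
The proof should be essentially immediate from the two results that are already on the table: the Lebesgue $\mathcal{I}$-density theorem (Theorem \ref{4}) and the preceding Proposition \ref{5}. So my plan is to package them together rather than to reopen the $\mathcal{I}$-$\liminf$ machinery.

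First I would invoke Theorem \ref{4} applied to the measurable set $A$, which yields
\[
\lambda\bigl(A \triangle \Theta_{\mathcal{I}}(A)\bigr) = 0.
\]
For this to be useful I need both $A$ and $\Theta_{\mathcal{I}}(A)$ to be measurable: $A$ is measurable by hypothesis, and the remark right after Theorem \ref{4} records that $\Theta_{\mathcal{I}}(H)$ is measurable for every measurable $H$, so in particular $\Theta_{\mathcal{I}}(A) \in \mathcal{L}$.

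Having two measurable sets with symmetric difference of Lebesgue measure zero, I would then plug $A$ and $B := \Theta_{\mathcal{I}}(A)$ into Proposition \ref{5}, which gives directly
\[
\Theta_{\mathcal{I}}(A) = \Theta_{\mathcal{I}}\bigl(\Theta_{\mathcal{I}}(A)\bigr),
\]
that is, the desired idempotence.

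There isn't really a hard step; the only thing one has to be careful about is the measurability of $\Theta_{\mathcal{I}}(A)$, since Proposition \ref{5} is stated for measurable inputs. That is guaranteed by the stated consequence of Theorem \ref{4}, so the corollary is a straightforward assembly of the two previous results and needs no further computation.
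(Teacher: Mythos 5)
Your proposal is correct and is essentially identical to the paper's own proof: apply Theorem \ref{4} to get $\lambda(A \triangle \Theta_{\mathcal{I}}(A))=0$ and then invoke Proposition \ref{5} with $B=\Theta_{\mathcal{I}}(A)$. Your extra remark about the measurability of $\Theta_{\mathcal{I}}(A)$ is a point the paper leaves implicit, and it is correctly justified by the observation following Theorem \ref{4}.
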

\begin{proof}
    By Lebesgue $\mathcal{I}$-Density Theorem \ref{4} $\lambda (A \triangle \Theta_\mathcal{I}(A))=0$. So by Proposition \ref{5} we have $\Theta_{\mathcal{I}}(A)=\Theta_{\mathcal{I}}(\Theta_{\mathcal{I}}(A))$.

\end{proof}

\begin{lma}\label{6}
  Given a pair of Lebesgue measurable sets $A$ and $B$ such that $A \subseteq B$, $\Theta_{\mathcal{I}}(A) \subseteq \Theta_{\mathcal{I}}(B)$ i.e. $\mathcal{I}$-density function is monotonic.  
\end{lma}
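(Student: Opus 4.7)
The plan is to chase the inequality $A \subseteq B$ through every layer of the definition of $\mathcal{I}\text{-}d_{-}$ and show that the lower $\mathcal{I}$-density of $B$ at $x$ dominates that of $A$ at $x$. So I would fix an arbitrary $x \in \Theta_{\mathcal{I}}(A)$ and aim to show $\mathcal{I}\text{-}d_{-}(x,B)=1$.

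First I would pick any admissible sequence of closed intervals $\{J_n\}_{n \in \mathbb{N}}$ about $x$ with $\mathscr{S}(J_n) \in \mathcal{F}(\mathcal{I})$, set $x_n = \lambda(A \cap J_n)/\lambda(J_n)$ and $y_n = \lambda(B \cap J_n)/\lambda(J_n)$, and observe that from $A \subseteq B$ monotonicity of Lebesgue measure gives $\lambda(A \cap J_n) \leq \lambda(B \cap J_n)$, so $x_n \leq y_n$ for every $n$. The next step is to verify the evident monotonicity $\mathcal{I}\text{-}\liminf x_n \leq \mathcal{I}\text{-}\liminf y_n$ by unpacking the definition: for any $a \in A_{y_n}$, the set $\{k: y_k < a\} \notin \mathcal{I}$, and since $\{k: x_k<a\} \supseteq \{k: y_k<a\}$ (from $x_n \leq y_n$), closure under subsets of the ideal forces $\{k: x_k<a\} \notin \mathcal{I}$ too, so $A_{y_n} \subseteq A_{x_n}$, whence $\inf A_{x_n} \leq \inf A_{y_n}$.

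Having this interval-by-interval inequality $\mathcal{I}\text{-}\liminf x_n \leq \mathcal{I}\text{-}\liminf y_n$, taking the infimum over all admissible $\{J_n\}$ on both sides yields $\mathcal{I}\text{-}d_{-}(x,A) \leq \mathcal{I}\text{-}d_{-}(x,B)$. On the other hand, $y_n \leq 1$ for every $n$, and this easily forces $\mathcal{I}\text{-}\liminf y_n \leq 1$ for any admissible sequence (so the defining infimum is at most $1$), hence $\mathcal{I}\text{-}d_{-}(x,B) \leq 1$. Combining with $\mathcal{I}\text{-}d_{-}(x,A)=1$, I conclude $\mathcal{I}\text{-}d_{-}(x,B)=1$, i.e.\ $x \in \Theta_{\mathcal{I}}(B)$.

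There is no serious obstacle here; the argument is a straightforward monotonicity chase. The only mildly delicate point is the monotonicity of $\mathcal{I}\text{-}\liminf$ with respect to pointwise inequality of sequences, which is not stated explicitly in the excerpt but is a one-line consequence of the hereditary property of the ideal $\mathcal{I}$. Everything else is bookkeeping with the definitions of $A_{x_n}$, $B_{x_n}$ and the outer infimum/supremum over admissible interval sequences.
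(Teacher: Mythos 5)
Your proposal is correct and follows essentially the same route as the paper: monotonicity of Lebesgue measure gives the pointwise inequality of the ratio sequences, monotonicity of $\mathcal{I}$-$\liminf$ transfers this to each admissible interval sequence, and taking the infimum yields $\mathcal{I}$-$d_{-}(x,A) \leq \mathcal{I}$-$d_{-}(x,B) \leq 1$. The only difference is that you spell out the hereditary-ideal argument for the monotonicity of $\mathcal{I}$-$\liminf$, which the paper takes for granted.
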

    
\begin{proof}
    If $A \subseteq B$, $\lambda(A \cap J_n) \leq \lambda(B \cap J_n)$ for each interval $J_n \subset \mathbb{R}$. So if $x \in \Theta_\mathcal{I}(A)$ then $\mathcal{I}-d_-(x,A)=1$. Hence,
    \begin{align*}
			\mathcal{I}-d_-(x,A) &=\inf \{\mathcal{I}-\liminf \frac{\lambda(A \cap J_n)}{\lambda(J_n)} : \{J_n\}_{n \in \mathbb{N}} \ \mbox{such that} \ \mathscr{S}(J_n) \in \mathcal{F}(\mathcal{I})\}\\ 
			&\leq \inf \{\mathcal{I}-\liminf \frac{\lambda(B \cap J_n)}{\lambda(J_n)} : \{J_n\}_{n \in \mathbb{N}} \ \mbox{such that} \ \mathscr{S}(J_n) \in \mathcal{F}(\mathcal{I})\} \\
			&= \mathcal{I}-d_-(x,B)
		\end{align*}
   Hence, $\mathcal{I}-d_-(x,B)\geq 1$. So, $\mathcal{I}-d_-(x,B)=1$ and hence $x \in \Theta_\mathcal{I}(B)$. Consequently, $\Theta_{\mathcal{I}}(A) \subseteq \Theta_{\mathcal{I}}(B)$.
\end{proof}

\begin{thm}\label{7}
    For every pair of Lebesgue measurable sets $A,B \in \mathcal{L}$, $\Theta_{\mathcal{I}}(A \cap B)=\Theta_{\mathcal{I}}(A) \cap \Theta_{\mathcal{I}}(B)$.
\end{thm}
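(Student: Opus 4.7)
The plan is to prove the two inclusions separately. The forward direction $\Theta_{\mathcal{I}}(A \cap B) \subseteq \Theta_{\mathcal{I}}(A) \cap \Theta_{\mathcal{I}}(B)$ is immediate from the monotonicity of $\Theta_{\mathcal{I}}$ established in Lemma \ref{6}, applied to the inclusions $A \cap B \subseteq A$ and $A \cap B \subseteq B$.

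For the reverse direction, I will fix $x \in \Theta_{\mathcal{I}}(A) \cap \Theta_{\mathcal{I}}(B)$ and an arbitrary sequence $\{J_n\}_{n \in \mathbb{N}}$ of closed intervals about $x$ with $\mathscr{S}(J_n) \in \mathcal{F}(\mathcal{I})$, and abbreviate
\[
a_n = \frac{\lambda(J_n \cap A)}{\lambda(J_n)}, \qquad b_n = \frac{\lambda(J_n \cap B)}{\lambda(J_n)}, \qquad c_n = \frac{\lambda(J_n \cap (A \cap B))}{\lambda(J_n)}.
\]
The first step will be the observation that, since $a_n, b_n \in [0,1]$, one has $\mathcal{I}\text{-}\liminf a_n \leq 1$ and $\mathcal{I}\text{-}\liminf b_n \leq 1$ for every admissible sequence. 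The hypotheses $\mathcal{I}\text{-}d_{-}(x,A) = \mathcal{I}\text{-}d_{-}(x,B) = 1$ then force $\mathcal{I}\text{-}\liminf a_n = \mathcal{I}\text{-}\liminf b_n = 1$ for our particular $\{J_n\}$, since an infimum of values bounded above by $1$ can equal $1$ only if each individual value is exactly $1$.

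Next I will exploit the inclusion--exclusion inequality $c_n \geq a_n + b_n - 1$, which follows from $\lambda(J_n \cap A) + \lambda(J_n \cap B) = \lambda(J_n \cap (A \cap B)) + \lambda(J_n \cap (A \cup B))$ together with $\lambda(J_n \cap (A \cup B)) \leq \lambda(J_n)$. Applying Proposition \ref{2}(i) (constant translation) and Theorem \ref{1}(ii) (superadditivity) to the $\mathcal{I}$-bounded sequences $\{a_n\}, \{b_n\}$ will yield
\[
\mathcal{I}\text{-}\liminf c_n \;\geq\; \mathcal{I}\text{-}\liminf(a_n + b_n - 1) \;=\; -1 + \mathcal{I}\text{-}\liminf(a_n + b_n) \;\geq\; -1 + 1 + 1 = 1.
\]
Combined with the fact that $c_n \leq 1$ gives $\mathcal{I}\text{-}\liminf c_n \leq 1$, this forces equality. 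Since $\{J_n\}$ was arbitrary, I will conclude $\mathcal{I}\text{-}d_{-}(x, A \cap B) = 1$, so $x \in \Theta_{\mathcal{I}}(A \cap B)$.

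I expect the main obstacle to be the first step above: upgrading the global information $\mathcal{I}\text{-}d_{-}(x,A) = 1$ (an infimum over all admissible sequences) into the pointwise statement that every admissible sequence already satisfies $\mathcal{I}\text{-}\liminf a_n = 1$. Once this upgrade is in hand, the remainder is a routine application of the additivity properties from Section 2 to the elementary measure-theoretic inequality $c_n \geq a_n + b_n - 1$; the $\mathcal{I}$-boundedness hypothesis needed for Theorem \ref{1} is immediate from $0 \leq a_n, b_n \leq 1$.
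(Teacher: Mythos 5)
Your proposal is correct and follows essentially the same route as the paper: monotonicity (Lemma \ref{6}) for the easy inclusion, and the inequality $c_n \ge a_n + b_n - 1$ combined with Proposition \ref{2} and Theorem \ref{1}(ii) for the reverse one. The only real difference is that the paper never performs the ``upgrade'' you flag as the main obstacle --- it keeps the outer infimum over admissible sequences throughout and concludes via $\inf(f+g)\ge\inf f+\inf g$ --- whereas your per-sequence version is also valid, since each $\mathcal{I}\text{-}\liminf a_n\le 1$ (the ratios lie in $[0,1]$ on $\mathscr{S}(J_n)\in\mathcal{F}(\mathcal{I})$) and an infimum of numbers all $\le 1$ can equal $1$ only if every one of them equals $1$.
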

\begin{proof} Since $A \cap B \subseteq A$ and $A \cap B \subseteq B$, so by Lemma \ref{6}, $\Theta_{\mathcal{I}}(A \cap B) \subseteq \Theta_{\mathcal{I}}(A)$ and $\Theta_{\mathcal{I}}(A \cap B) \subseteq \Theta_{\mathcal{I}}(B)$. Consequently, $\Theta_{\mathcal{I}}(A \cap B) \subseteq \Theta_{\mathcal{I}}(A) \cap \Theta_{\mathcal{I}}(B)$. Now we are to prove $ \Theta_{\mathcal{I}}(A) \cap \Theta_{\mathcal{I}}(B) \subseteq \Theta_{\mathcal{I}}(A \cap B)$. Let $x \in \Theta_{\mathcal{I}}(A) \cap \Theta_{\mathcal{I}}(B)$. Thus $x \in \Theta_{\mathcal{I}}(A)$ and $x \in  \Theta_{\mathcal{I}}(B)$. So, $\mathcal{I}-d_{-}(x,A)=1$ and $\mathcal{I}-d_{-}(x,B)=1$. We are to show $\mathcal{I}-d_{-}(x,A \cap B)=1$. It is sufficient to show $\mathcal{I}-d_{-}(x,A \cap B)\geq1$. 

Let $\{I_k\}_{k \in \mathbb{N}}$ be any sequence of closed intervals about the point $x$ such that $\mathscr{S}(I_k) \in \mathcal{F}(\mathcal{I})$. Then for all $k \in \mathscr{S}(I_k)$, $\lambda(A \cap I_k)+\lambda(B \cap I_k)-\lambda(A \cap B \cap I_k) \leq \lambda(I_k)$.

So, for $k \in \mathscr{S}(I_k)$ we have 
		\begin{equation*}
			\frac{\lambda(A \cap I_k)}{\lambda(I_k)}+\frac{\lambda(B \cap I_k)}{\lambda(I_k)} \leq 1+ \frac{\lambda((A \cap B) \cap I_k)}{\lambda(I_k)}
		\end{equation*}
	
	Let us take $x_k=\frac{\lambda(A \cap I_k)}{\lambda(I_k)}, y_k=\frac{\lambda(B \cap I_k)}{\lambda(I_k)}, z_k=\frac{\lambda((A \cap B) \cap I_k)}{\lambda(I_k)}$. So, $z_k \geq x_k+y_k-1$. Thus,
\begin{align*}
     \mathcal{I}-\liminf z_n & \geq \mathcal{I}-\liminf (x_n+y_n-1)\\
	& = \mathcal{I}-\liminf (x_n+y_n) - 1 \ \mbox{by Proposition \ref{2}}\\ 
		& \geq \mathcal{I}-\liminf x_n+ \mathcal{I}-\liminf y_n - 1 \ \mbox{by Theorem \ref{1}}
 \end{align*}

 Hence,
 \begin{multline*}
      \inf \{\mathcal{I}-\liminf z_n: \{I_n\} \ \mbox{such that} \ \mathscr{S}(I_n) \in \mathcal{F}(\mathcal{I}) \}\\
      \geq  \inf \{\mathcal{I}-\liminf x_n + \mathcal{I}-\liminf y_n -1: \{I_n\} \ \mbox{such that} \ \mathscr{S}(I_n) \in \mathcal{F}(\mathcal{I}) \}\\
      \geq \inf \{\mathcal{I}-\liminf x_n: \{I_n\} \ \mbox{such that} \ \mathscr{S}(I_n) \in \mathcal{F}(\mathcal{I}) \}  \\
		     \qquad      \qquad    + \inf \{\mathcal{I}-\liminf y_n: \{I_n\} \ \mbox{such that} \ \mathscr{S}(I_n) \in \mathcal{F}(\mathcal{I}) \} -1
 \end{multline*}

 So,
 \begin{align*}
     \mathcal{I}-d_{-}(x,A\cap B) & =\inf \{\mathcal{I}-\liminf z_n: \{I_n\} \ \mbox{such that} \ \mathscr{S}(I_n) \in \mathcal{F}(\mathcal{I}) \}\\
     & \geq \mathcal{I}-d_{-}(x,A)+\mathcal{I}-d_{-}(x,B)-1 \\
     & = 1+1-1=1
 \end{align*}

 Therefore, $\mathcal{I}-d_{-}(x,A\cap B)=1$. So, $x \in \Theta_{\mathcal{I}}(A \cap B)$. Hence, $ \Theta_{\mathcal{I}}(A) \cap \Theta_{\mathcal{I}}(B) \subseteq \Theta_{\mathcal{I}}(A \cap B)$. This completes the proof.
\end{proof}

\begin{lma}\label{8} Let $A,B \subseteq \mathbb{R}$ such that $\lambda(A \setminus B)=0$ then $\Theta_{\mathcal{I}}(A) \subseteq \Theta_{\mathcal{I}}(B)$.
\end{lma}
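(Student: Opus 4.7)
The plan is to bridge $A$ and $B$ by the intermediate set $A \cap B$, using Proposition \ref{5} on one side and monotonicity (Lemma \ref{6}) on the other. Both $A$ and $B$ are taken to be Lebesgue measurable (since otherwise $\Theta_{\mathcal{I}}$ is not defined on them), so $A \cap B \in \mathcal{L}$ as well.

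First I would compute the symmetric difference $A \triangle (A \cap B)$. Since $(A \cap B) \setminus A = \emptyset$ and $A \setminus (A \cap B) = A \setminus B$, we get
\[
A \triangle (A \cap B) = A \setminus B,
\]
which has Lebesgue measure zero by hypothesis. Applying Proposition \ref{5} to the pair $A$ and $A \cap B$ yields $\Theta_{\mathcal{I}}(A) = \Theta_{\mathcal{I}}(A \cap B)$.

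Next, since $A \cap B \subseteq B$ and both sets are measurable, Lemma \ref{6} (monotonicity of $\Theta_{\mathcal{I}}$) gives $\Theta_{\mathcal{I}}(A \cap B) \subseteq \Theta_{\mathcal{I}}(B)$. Chaining the two observations produces $\Theta_{\mathcal{I}}(A) = \Theta_{\mathcal{I}}(A \cap B) \subseteq \Theta_{\mathcal{I}}(B)$, which is the desired conclusion.

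There is no real obstacle here; the only thing to watch is that we should not try to apply Proposition \ref{5} directly to $A$ and $B$ (their symmetric difference need not have measure zero, only $A \setminus B$ does), which is precisely why $A \cap B$ is inserted as the bridging set. Everything else is a direct invocation of the two previously established facts.
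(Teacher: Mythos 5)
Your proof is correct, but it takes a different route from the paper. You bridge $A$ and $B$ through $A\cap B$: since $A \triangle (A\cap B) = A\setminus B$ is null, Proposition \ref{5} gives $\Theta_{\mathcal{I}}(A)=\Theta_{\mathcal{I}}(A\cap B)$, and monotonicity (Lemma \ref{6}) gives $\Theta_{\mathcal{I}}(A\cap B)\subseteq\Theta_{\mathcal{I}}(B)$. The paper instead argues by contradiction: it takes $x\in\Theta_{\mathcal{I}}(A)\setminus\Theta_{\mathcal{I}}(B)$ and splits into two cases according to whether $\mathcal{I}-d_{-}(x,A\setminus B)$ is positive (which forces $\lambda(A\setminus B)>0$) or zero (where the additivity of $\mathcal{I}$-density over the disjoint decomposition $A=(A\setminus B)\cup(A\cap B)$, Lemma \ref{3}, yields $\mathcal{I}-d_{-}(x,A\cap B)=1$ and hence $\mathcal{I}-d_{-}(x,B)\geq 1$). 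Both arguments ultimately pass through $A\cap B$, but yours is direct, avoids the case analysis entirely, and leans only on the two structural facts about $\Theta_{\mathcal{I}}$ already established earlier in Section \ref{section 3} (null-invariance and monotonicity), rather than on the additivity lemma for $\mathcal{I}$-densities; this also sidesteps the delicate point of applying Lemma \ref{3}, stated for two-sided densities, to lower densities. Your remark that measurability of $A$ and $B$ must be assumed (the lemma as stated says only $A,B\subseteq\mathbb{R}$) is apt, and the paper's own proof makes the same implicit assumption.
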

\begin{proof} Let us assume $\lambda(A \setminus B)=0$ and $\Theta_{\mathcal{I}}(A) \nsubseteq \Theta_{\mathcal{I}}(B)$. Then there exists $x \in \Theta_{\mathcal{I}}(A)$ such that $x \notin \Theta_{\mathcal{I}}(B)$ i.e $\mathcal{I}-d_{-}(x,A)=1$ but $\mathcal{I}-d_{-}(x,B)<1$. Now we have the following two cases.

Case(i) If $\mathcal{I}-d_{-}(x,A \setminus B)>0$ then 
$$\inf \{\mathcal{I}-\liminf \frac{\lambda((A \setminus B) \cap J_n)}{\lambda(J_n)} : \{J_n\}_{n \in \mathbb{N}} \ \mbox{such that} \ \mathscr{S}(J_n) \in \mathcal{F}(\mathcal{I})\}>0$$ 
So, for some sequence of closed intervals about $x$, say $\{J_n\}_{n \in \mathbb{N}} \ \mbox{such that} \ \mathscr{S}(J_n) \in \mathcal{F}(\mathcal{I})$ we have 
$$\mathcal{I}-\liminf \frac{\lambda((A \setminus B) \cap J_n)}{\lambda(J_n)} >0 $$
Thus for some $n_0 \in \mathbb{N}$ we have $\lambda((A \setminus B) \cap J_{n_0})>0$. So there exists a measurable subset $H \subseteq A$ such that $H \cap B = \emptyset$ and $\lambda(H) >0$. So, $\lambda(A \setminus B) \geq \lambda(H) >0$. It contradicts the fact that $\lambda(A \setminus B)=0$.

Case(ii): If $\mathcal{I}-d_{-}(x,A \setminus B)=0$. Note that $A$ can be written as a disjoint union of $(A \setminus B)$ and $(A \cap B)$. Since $(A \setminus B)$ and $(A \cap B)$ are measurable, so by Lemma \ref{3} 
$$\mathcal{I}-d_{-}(x,A)=\mathcal{I}-d_{-}(x,A \setminus B)+\mathcal{I}-d_{-}(x,A \cap B)=\mathcal{I}-d_{-}(x,A \cap B)$$ 
Thus $\mathcal{I}-d_{-}(x,A \cap B)=1$. Now since $A \cap B \subset B$ so $\mathcal{I}-d_{-}(x,A \cap B) \leq \mathcal{I}-d_{-}(x,B)$ which implies $\mathcal{I}-d_{-}(x,B) \geq 1$ which is a contradiction since $x \notin \Theta_{\mathcal{I}}(B)$.

    So our assumption was wrong. Hence the result follows.
\end{proof}

\begin{lma}\label{9}
    Let $A$ be any subset of $\mathbb{R}$ such that $\lambda(A)=0$ then $\Theta_{\mathcal{I}}(A)=\emptyset$ and $\Theta_{\mathcal{I}}(\mathbb{R} \setminus A)=\mathbb{R}$
\end{lma}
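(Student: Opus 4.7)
The plan is to prove the two claims separately by unwinding the definition of $\mathcal{I}$-density, with the second claim reduced to the first via Proposition \ref{5}.

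For the first assertion $\Theta_{\mathcal{I}}(A) = \emptyset$, the key observation is that $\lambda(A) = 0$ forces $\lambda(A \cap J) = 0$ for every closed interval $J$. So, fixing an arbitrary $p \in \mathbb{R}$ and an arbitrary sequence $\{J_n\}_{n \in \mathbb{N}}$ of closed intervals about $p$ with $\mathscr{S}(J_n) \in \mathcal{F}(\mathcal{I})$, the quotient $x_n = \lambda(A \cap J_n)/\lambda(J_n)$ is $0$ for every $n \in \mathscr{S}(J_n)$. This means $\{n : x_n < a\} \supseteq \mathscr{S}(J_n) \notin \mathcal{I}$ whenever $a > 0$, while $\{n : x_n < a\} \subseteq \{n : x_n < 0\} = \emptyset$ whenever $a \leq 0$. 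Hence $A_{x_n} = (0,\infty)$ and $\mathcal{I}-\liminf x_n = 0$. Taking infimum over all admissible sequences gives $\mathcal{I}-d_{-}(p,A) = 0$, so no $p$ can lie in $\Theta_{\mathcal{I}}(A)$.

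For the second assertion $\Theta_{\mathcal{I}}(\mathbb{R} \setminus A) = \mathbb{R}$, I would first reduce to $\Theta_{\mathcal{I}}(\mathbb{R})$. Since $(\mathbb{R} \setminus A) \triangle \mathbb{R} = A$ has measure zero, Proposition \ref{5} yields $\Theta_{\mathcal{I}}(\mathbb{R} \setminus A) = \Theta_{\mathcal{I}}(\mathbb{R})$. Then I would establish $\Theta_{\mathcal{I}}(\mathbb{R}) = \mathbb{R}$ by a symmetric direct computation: for any $p \in \mathbb{R}$ and any admissible $\{J_n\}$, the quotient $x_n = \lambda(\mathbb{R} \cap J_n)/\lambda(J_n)$ equals $1$ for every $n \in \mathscr{S}(J_n)$; the set $\{n : x_n < a\}$ is empty for $a \leq 1$ and contains $\mathscr{S}(J_n)$ for $a > 1$, so $A_{x_n} = (1,\infty)$ and $\mathcal{I}-\liminf x_n = 1$. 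Taking infimum gives $\mathcal{I}-d_{-}(p,\mathbb{R}) = 1$, i.e.\ $p \in \Theta_{\mathcal{I}}(\mathbb{R})$, so $\Theta_{\mathcal{I}}(\mathbb{R}) = \mathbb{R}$.

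No genuine obstacle is expected, as both steps are immediate from the definition once Proposition \ref{5} is invoked for the second part. The only minor technicality is that $x_n$ is ill-defined on indices $n \in \mathbb{N} \setminus \mathscr{S}(J_n)$ where $\lambda(J_n) = 0$; this exceptional index set lies in $\mathcal{I}$ and hence is irrelevant to the evaluation of $\mathcal{I}-\liminf$ and $\mathcal{I}-\limsup$, so one may assign $x_n$ any value on it without affecting the argument.
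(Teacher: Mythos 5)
Your proof is correct and follows essentially the same route as the paper: a direct unwinding of the definition showing the relevant quotients are $0$ (resp.\ $1$) on $\mathscr{S}(J_n)\in\mathcal{F}(\mathcal{I})$. The only cosmetic difference is that you reduce $\Theta_{\mathcal{I}}(\mathbb{R}\setminus A)$ to $\Theta_{\mathcal{I}}(\mathbb{R})$ via Proposition \ref{5} (valid, since a null set is measurable by completeness of $\lambda$), whereas the paper performs the same measure identity $\lambda((\mathbb{R}\setminus A)\cap I_k)=\lambda(I_k)$ inline.
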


\begin{proof} If $\lambda(A)=0$ then at each point $x \in \mathbb{R}$ we have
\begin{align*}
			\mathcal{I}-d^-(x, A) &=\sup \{\mathcal{I}-\limsup \frac{\lambda( A \cap I_n)}{\lambda(I_n)} : \{I_n\}_{n \in \mathbb{N}} \ \mbox{such that} \ \mathscr{S}(I_n) \in \mathcal{F}(\mathcal{I})\}\\ 
			&= 0
		\end{align*}
  This implies, $\mathcal{I}-d_-(x, A)=0$. Thus, $\Theta_{\mathcal{I}}(A)$ is an empty set. 

    Clearly, $\Theta_{\mathcal{I}}(\mathbb{R} \setminus A) \subseteq \mathbb{R}$. We are to show $\mathbb{R} \subseteq \Theta_{\mathcal{I}}(\mathbb{R} \setminus A)$. Let $x \in \mathbb{R}$ and let $\{I_k\}_{k \in \mathbb{N}}$ be any sequence of closed intervals about $x$ such that $\mathscr{S}(I_k) \in \mathcal{F}(\mathcal{I})$. Then for $k \in \mathscr{S}(I_k)$ we have 
    $$\lambda(\mathbb{R} \cap I_k)=\lambda((\mathbb{R} \setminus A)\cap I_k)+\lambda(A \cap I_k)=\lambda((\mathbb{R} \setminus A)\cap I_k)$$
    Now,
    \begin{align*}
			\mathcal{I}-d_-(x,\mathbb{R} \setminus A) &=\inf \{\mathcal{I}-\liminf \frac{\lambda((\mathbb{R} \setminus A) \cap I_n)}{\lambda(I_n)} : \{I_n\}_{n \in \mathbb{N}} \ \mbox{such that} \ \mathscr{S}(I_n) \in \mathcal{F}(\mathcal{I})\}\\ 
			&= \inf \{\mathcal{I}-\liminf \frac{\lambda(\mathbb{R} \cap I_n)}{\lambda(I_n)} : \{I_n\}_{n \in \mathbb{N}} \ \mbox{such that} \ \mathscr{S}(I_n) \in \mathcal{F}(\mathcal{I})\} \\
			&= 1
		\end{align*}
  Thus, $x \in \Theta_{\mathcal{I}}(\mathbb{R} \setminus A)$. So, $\mathbb{R} \subseteq \Theta_{\mathcal{I}}(\mathbb{R} \setminus A)$.
\end{proof}

\begin{thm}
    If $A$ is a measurable subset of $\mathbb{R}$, then $\Theta_{\mathcal{I}}(A) \cap \Theta_{\mathcal{I}}(A^c)=\emptyset$
\end{thm}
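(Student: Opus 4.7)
The plan is to argue by contradiction. Suppose that some $x \in \Theta_{\mathcal{I}}(A) \cap \Theta_{\mathcal{I}}(A^c)$, so that $\mathcal{I}-d_{-}(x,A) = 1$ and $\mathcal{I}-d_{-}(x,A^c) = 1$. My strategy is to upgrade these lower densities to full $\mathcal{I}$-densities and then apply the additivity result in Lemma \ref{3} to reach a numerical contradiction with the trivial value of the $\mathcal{I}$-density of $\mathbb{R}$.

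First I would promote the hypothesis $\mathcal{I}-d_{-}(x,A)=1$ to $\mathcal{I}-d(x,A)=1$. For any admissible sequence of closed intervals $\{J_n\}_{n \in \mathbb{N}}$ about $x$ with $\mathscr{S}(J_n) \in \mathcal{F}(\mathcal{I})$, the ratios $x_n = \lambda(A \cap J_n)/\lambda(J_n)$ lie in $[0,1]$, so $\mathcal{I}-\limsup x_n \leq 1$. Since $\mathcal{I}-d_{-}(x,A)$ is an infimum over all such sequences and equals $1$, we must have $\mathcal{I}-\liminf x_n \geq 1$ for every such $\{J_n\}$. Combined with $\mathcal{I}-\liminf x_n \leq \mathcal{I}-\limsup x_n \leq 1$, this yields $\mathcal{I}-\liminf x_n = \mathcal{I}-\limsup x_n = 1$ uniformly in $\{J_n\}$. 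Taking the supremum over $\{J_n\}$ gives $\mathcal{I}-d^{-}(x,A) = 1$, and therefore $\mathcal{I}-d(x,A)$ exists and equals $1$. The identical argument applied to $A^c$ produces $\mathcal{I}-d(x,A^c) = 1$.

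Next I would apply Lemma \ref{3} to the disjoint measurable sets $A$ and $A^c$, which yields
\[
\mathcal{I}-d(x, A \cup A^c) = \mathcal{I}-d(x,A) + \mathcal{I}-d(x, A^c) = 1 + 1 = 2.
\]
But $A \cup A^c = \mathbb{R}$, and for any sequence of closed intervals $\{J_n\}$ about $x$ one has $\lambda(\mathbb{R} \cap J_n)/\lambda(J_n) = 1$ for every $n$, hence trivially $\mathcal{I}-d(x, \mathbb{R}) = 1$. This contradicts the value $2$ obtained above, and the theorem follows.

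There is no serious technical obstacle. The only subtlety is the uniform upgrade at the first step: the statement $\mathcal{I}-d_{-}(x,A)=1$ is an infimum over all admissible $\{J_n\}$, so one must exploit the two-sided bound $0 \leq x_n \leq 1$ to conclude that $\mathcal{I}-\liminf x_n = \mathcal{I}-\limsup x_n = 1$ for \emph{every} admissible sequence (not just for some infimum-realizing sequence) before one can invoke the additivity Lemma \ref{3}.
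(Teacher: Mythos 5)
Your proof is correct, and it rests on the same underlying mechanism as the paper's: decompose $\mathbb{R}$ as the disjoint union $A \cup A^c$ and derive the numerical absurdity that the total density at $x$ would be $2$ rather than $1$. The only real difference is in packaging. The paper stays at the level of a single fixed admissible sequence $\{J_n\}$: since each lower density is an infimum equal to $1$, both $\mathcal{I}$-liminfs of the ratio sequences are at least $1$, and the superadditivity of $\mathcal{I}-\liminf$ (Theorem \ref{1}) applied to their sum, which is identically $1$, gives $1 \geq 2$ directly. You instead first upgrade $\mathcal{I}-d_{-}(x,A)=1$ to the existence of the full density $\mathcal{I}-d(x,A)=1$ (using the two-sided bound $0 \leq x_n \leq 1$, which correctly forces $\mathcal{I}-\liminf = \mathcal{I}-\limsup = 1$ for every admissible sequence), and then invoke the additivity Lemma \ref{3}. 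Your upgrade step is sound and is in fact a slightly stronger intermediate conclusion than the paper extracts; the cost is that you lean on Lemma \ref{3}, whose proof itself reduces to the same superadditivity estimate the paper uses directly. Either route is acceptable.
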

\begin{proof} If possible let $\Theta_{\mathcal{I}}(A) \cap \Theta_{\mathcal{I}}(A^c)\neq \emptyset$. Then there exists a point $x \in \Theta_{\mathcal{I}}(A) \cap \Theta_{\mathcal{I}}(A^c)$. So, $\mathcal{I}-d_{-}(x,A)=1$ and $\mathcal{I}-d_{-}(x,A^{c})=1$. Thus,
$$\inf \{\mathcal{I}-\liminf \frac{\lambda(A \cap J_n)}{\lambda(J_n)} : \{J_n\}_{n \in \mathbb{N}} \ \mbox{such that} \ \mathscr{S}(J_n) \in \mathcal{F}(\mathcal{I})\}=1$$ and $$\inf \{\mathcal{I}-\liminf \frac{\lambda(A^{c} \cap J_n)}{\lambda(J_n)} : \{J_n\}_{n \in \mathbb{N}} \ \mbox{such that} \ \mathscr{S}(J_n) \in \mathcal{F}(\mathcal{I})\}=1$$
So for any fixed interval $\{J_n\}_{n \in \mathbb{N}}$, $\mathcal{I}-\liminf \frac{\lambda(A \cap J_n)}{\lambda(J_n)} \geq 1$ and $\mathcal{I}-\liminf \frac{\lambda(A^{c} \cap J_n)}{\lambda(J_n)} \geq 1$. Therefore,
$$\mathcal{I}-\liminf \frac{\lambda(A \cap J_n)}{\lambda(J_n)}+\mathcal{I}-\liminf \frac{\lambda(A^{c} \cap J_n)}{\lambda(J_n)} \geq 2$$ As a result,
$$\mathcal{I}-\liminf \left\{\frac{\lambda(A \cap J_n)}{\lambda(J_n)}+\frac{\lambda(A^{c} \cap J_n)}{\lambda(J_n)}\right\}=\mathcal{I}-\liminf \frac{\lambda(\mathbb{R} \cap J_n)}{\lambda(J_n)}=1 \geq 2$$ which is a contradiction. So the result follows.
    
\end{proof}

   


	\section{$\mathcal{I}$-density topology}\label{section 4}

We consider $\mathcal{T}^{\mathcal{I}}$ to be the collection of measurable subsets of $\mathbb{R}$ such that each point of the set is $\mathcal{I}$-density point. So,
\begin{equation*}
    \mathcal{T}^{\mathcal{I}}=\{A \in \mathcal{L}: A \subset \Theta_{\mathcal{I}}(A)\}
\end{equation*}

Whether such a collection forms a topology is the next question. The difficulty lies in the fact that a topology must be closed under arbitrary unions and arbitrary union of measurable sets may not be necessarily measurable. 

\begin{thm}\label{10}
    The Lebesgue measure $\lambda$ on $\mathbb{R}$ satisfies the countable chain condition. i.e. any collection of measurable sets each with positive measure such that the intersection of two distinct elements of that collection has measure zero is countable.
\end{thm}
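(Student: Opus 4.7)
The plan is to reduce to a counting argument inside bounded sets by using $\sigma$-finiteness of $\lambda$ and the fact that the measure of a finite union of pairwise almost-disjoint sets equals the sum of their measures. Concretely, let $\mathcal{A}$ be any family of measurable sets of positive measure whose pairwise intersections have measure zero. For each pair of positive integers $n,m$, define
\begin{equation*}
    \mathcal{A}_{n,m} = \{A \in \mathcal{A} : \lambda(A \cap [-m,m]) > 1/n\}.
\end{equation*}
Since every $A \in \mathcal{A}$ has $\lambda(A)>0$ and $\lambda(A)=\lim_{m\to\infty}\lambda(A\cap[-m,m])$, there exist $n,m$ with $A \in \mathcal{A}_{n,m}$. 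Hence $\mathcal{A} = \bigcup_{n,m\in\mathbb{N}} \mathcal{A}_{n,m}$, and it suffices to show each $\mathcal{A}_{n,m}$ is finite, for then $\mathcal{A}$ is a countable union of finite sets and therefore countable.

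The key step is the finiteness of each $\mathcal{A}_{n,m}$. For this, I would pick any distinct $A_1,\dots,A_k \in \mathcal{A}_{n,m}$ and bound $k$ using $\lambda([-m,m]) = 2m$. Because $\lambda(A_i \cap A_j)=0$ for $i\neq j$, the sets $A_i \cap [-m,m]$ are pairwise almost disjoint, so by finite subadditivity and the hypothesis on pairwise intersections,
\begin{equation*}
    2m \;=\; \lambda([-m,m]) \;\geq\; \lambda\!\left(\bigcup_{i=1}^{k} (A_i \cap [-m,m])\right) \;=\; \sum_{i=1}^{k} \lambda(A_i \cap [-m,m]) \;>\; \frac{k}{n}.
\end{equation*}
Hence $k < 2mn$, which forces $|\mathcal{A}_{n,m}| \leq 2mn < \infty$.

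The only mild subtlety, which I would make explicit, is the equality $\lambda(\bigcup_{i=1}^k (A_i \cap [-m,m])) = \sum_{i=1}^k \lambda(A_i \cap [-m,m])$ when the sets are merely almost-disjoint rather than disjoint; this follows either from inclusion–exclusion (every intersection term vanishes) or by replacing $A_i$ with the disjointification $A_i \setminus \bigcup_{j<i} A_j$, which changes each measure by $0$. No other obstacle arises: $\sigma$-finiteness of $\lambda$ handles the reduction to bounded windows, and the measure bound handles the finiteness in each window.
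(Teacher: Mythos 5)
Your proof is correct and follows essentially the same strategy as the paper's: localize to a bounded window, threshold the measure from below to split $\mathcal{A}$ into countably many subfamilies, and use additivity over the pairwise almost-disjoint pieces to bound each subfamily. The only difference is cosmetic --- you prove each subfamily is finite directly via the explicit bound $k<2mn$, while the paper argues by contradiction with an infinite sum exceeding $\lambda([k_0,k_0+1])$ --- and your handling of the almost-disjointness via disjointification is the cleaner of the two.
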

\begin{proof}
   Let $\mathcal{A}$ be a collection of sets $\{A_\alpha\}_{\alpha \in \Lambda}$ where $A_\alpha \subseteq \mathbb{R}$ and $\Lambda$ is arbitrary indexing set such that for each $\alpha \in \Lambda$, $A_\alpha$ is measurable, $\lambda(A_\alpha)>0$ and $\lambda(A_\alpha \cap A_\beta)=0$ whenever $\alpha \neq \beta$, then we are to show that $\mathcal{A}$ is countable. Let us assume that $\mathcal{A}$ is uncountable. Consider $\mathbb{R}$ as $\bigcup_{n \in \mathbb{Z}}[n,n+1]$ where $\mathbb{Z}$ is the set of integers. For any $\alpha \in \Lambda$, $A_\alpha = A_\alpha \cap \mathbb{R}= A_\alpha \cap (\bigcup_{k \in \mathbb{Z}}[k,k+1])=\bigcup_{k \in \mathbb{Z}}(A_\alpha \cap [k,k+1])$. Therefore, $\lambda(A_\alpha)=\sum_{k \in \mathbb{Z}} \lambda (A_\alpha \cap [k,k+1])$. Since $\lambda(A_\alpha)>0$ so there exists at least one $k$ such that  $\lambda (A_\alpha \cap [k,k+1])>0$. Thus for each $A_\alpha$ there exists some $i \in \mathbb{Z}$ such that $\lambda (A_\alpha \cap [i,i+1])>0$. Now if each interval $[k,k+1]$ for $k \in \mathbb{Z}$ intersect with only countably many $A_\alpha$'s such that $\lambda (A_\alpha \cap [k,k+1])>0$ then the collection $\{A_\alpha\}_{\alpha \in \Lambda}$ will be countable since countable union of countably many elements is again countable; which is a contradiction. Therefore, there exists some $k_0 \in \mathbb{Z}$ such that $[k_0,k_0+1]$ intersect with uncountably many $A_\alpha$ in $\mathcal{A}$ such that $\lambda (A_\alpha \cap [k_0,k_0+1])>0$. Take, $\Lambda^{'}=\{\alpha \in \Lambda: \lambda (A_\alpha \cap [k_0,k_0+1])>0\}$. Then clearly, $\Lambda^{'}$ is uncountable and $\Lambda^{'} \subseteq \Lambda$. Now $$\Lambda^{'}=\{\alpha \in \Lambda: \lambda (A_\alpha \cap [k_0,k_0+1])>0\}=\bigcup_{m \in \mathbb{N}} \{\alpha \in \Lambda: \lambda (A_\alpha \cap [k_0,k_0+1]) \geq \frac{1}{m}\}$$
   If each set in the above expression under union is countable then $\Lambda^{'}$ will be countable. So there exists some $m_0$ such that $\{\alpha \in \Lambda: \lambda (A_\alpha \cap [k_0,k_0+1]) \geq \frac{1}{m_0}\}$ is uncountable. 
   
   Let $\Lambda^{''}=\{\alpha \in \Lambda: \lambda (A_\alpha \cap [k_0,k_0+1]) \geq \frac{1}{m_0}\}$. Then $\Lambda^{''} \subseteq \Lambda^{'} \subseteq \Lambda$. Since we have assumed $\lambda(A_\alpha \cap A_\beta)=0$ whenever $\alpha \neq \beta$, so $$\lambda \left([k_0,k_0+1]\right) \geq \sum_{\alpha \in \Lambda^{'}} \lambda(A_{\alpha} \cap [k_0,k_0+1]) \geq \sum_{\alpha \in \Lambda^{''}} \lambda(A_{\alpha} \cap [k_0,k_0+1]) \geq \sum_{\alpha \in \Lambda^{''}} \frac{1}{m_0} = \infty$$
This is a contradiction. So, $\mathcal{A}$ must be a countable collection. This completes the proof.

\end{proof}

Next we see under certain conditions, union of arbitrary collection of measurable sets can indeed become measurable.

\begin{thm}
    If $\{A_\alpha\}_{\alpha \in \Lambda}$ is an arbitrary collection of measurable sets where $\Lambda$ is arbitrary indexing set, such that for all $\alpha \in \Lambda$, $A_{\alpha} \subseteq \Theta_{\mathcal{I}}(A_{\alpha})$ and $\lambda(A_\alpha \setminus B)=0$ for any measurable set $B$ so that $B \subseteq \bigcup_{\alpha \in \Lambda}A_{\alpha}$ then $\bigcup_{\alpha \in \Lambda}A_{\alpha}$ is measurable.
\end{thm}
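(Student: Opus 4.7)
The plan is to follow the classical density-topology strategy: first construct a measurable set $B \subseteq \bigcup_{\alpha \in \Lambda} A_\alpha$ that essentially covers every $A_\alpha$ (in the sense that $\lambda(A_\alpha \setminus B) = 0$), then use the density hypothesis $A_\alpha \subseteq \Theta_\mathcal{I}(A_\alpha)$ together with the Lebesgue $\mathcal{I}$-density theorem to conclude that $\bigcup_\alpha A_\alpha$ differs from $B$ by a null set and is therefore measurable.

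To produce such a $B$, I would apply Zorn's lemma to the family of subcollections $\mathcal{B} \subseteq \{A_\alpha\}_{\alpha \in \Lambda}$ whose distinct members pairwise intersect in a set of $\lambda$-measure zero, obtaining a maximal such subfamily $\mathcal{B}^{*} = \{A_{\alpha_i}\}_{i \in I}$. Observe first that any $A_\alpha$ with $\lambda(A_\alpha)=0$ must be empty, since Lemma \ref{9} gives $\Theta_\mathcal{I}(A_\alpha) = \emptyset$ and the hypothesis forces $A_\alpha \subseteq \emptyset$; such sets contribute nothing to the union and can be discarded. The remaining members of $\mathcal{B}^{*}$ then have positive measure and are pairwise essentially disjoint, so Theorem \ref{10} forces $I$ to be countable. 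Setting $B := \bigcup_{i \in I} A_{\alpha_i}$, this $B$ is measurable (countable union of measurable sets) and $B \subseteq \bigcup_{\alpha \in \Lambda} A_\alpha$. The hypothesis of the theorem, applied to this particular $B$, now yields $\lambda(A_\alpha \setminus B) = 0$ for every $\alpha \in \Lambda$.

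Next I would translate this measure-theoretic fact into a statement about $\mathcal{I}$-density. Since $A_\alpha \triangle (A_\alpha \cap B) = A_\alpha \setminus B$ has measure zero, Proposition \ref{5} gives $\Theta_\mathcal{I}(A_\alpha) = \Theta_\mathcal{I}(A_\alpha \cap B)$; and $A_\alpha \cap B \subseteq B$ combined with monotonicity (Lemma \ref{6}) gives $\Theta_\mathcal{I}(A_\alpha \cap B) \subseteq \Theta_\mathcal{I}(B)$. Chaining with the hypothesis $A_\alpha \subseteq \Theta_\mathcal{I}(A_\alpha)$ produces $A_\alpha \subseteq \Theta_\mathcal{I}(B)$ for every $\alpha$, and hence the sandwich
\[
B \ \subseteq \ \bigcup_{\alpha \in \Lambda} A_\alpha \ \subseteq \ \Theta_\mathcal{I}(B).
\]

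To finish, I would invoke the Lebesgue $\mathcal{I}$-density theorem (Theorem \ref{4}): $\lambda(B \triangle \Theta_\mathcal{I}(B)) = 0$, so $\Theta_\mathcal{I}(B) \setminus B$ is a null set. The sandwich forces $\bigcup_\alpha A_\alpha \setminus B \subseteq \Theta_\mathcal{I}(B) \setminus B$, which is then a subset of a null set and therefore Lebesgue-measurable by completeness of $\lambda$. Consequently $\bigcup_\alpha A_\alpha = B \cup \bigl(\bigcup_\alpha A_\alpha \setminus B\bigr)$ is a union of two measurable sets, hence measurable. The main obstacle I anticipate is the Zorn-plus-CCC construction of $B$ — specifically, verifying that after pruning empty $A_\alpha$'s, Theorem \ref{10} really delivers countability of the maximal essentially-disjoint subfamily so that $B$ is a legitimate countable union. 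Once $B$ is secured, the density-theoretic half of the argument collapses quickly through Proposition \ref{5}, Lemma \ref{6}, and Theorem \ref{4}.
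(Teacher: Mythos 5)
Your proof is correct and its decisive step --- the sandwich $B \subseteq \bigcup_{\alpha}A_{\alpha} \subseteq \Theta_{\mathcal{I}}(B)$ obtained from $\lambda(A_{\alpha}\setminus B)=0$, followed by the Lebesgue $\mathcal{I}$-density theorem and completeness of $\lambda$ --- is exactly the paper's argument (your Proposition~\ref{5}~+~Lemma~\ref{6} chain simply re-derives the paper's Lemma~\ref{8}). The only divergence is that you also construct $B$ via Zorn's lemma and the countable chain condition, which is redundant for this statement: the hypothesis already supplies the measurable $B$, and the paper defers that CCC-based selection to the subsequent proof that $\mathcal{T}^{\mathcal{I}}$ is a topology (where, note, it uses a greedy countable selection rather than a maximal pairwise essentially disjoint family, since maximality of the latter does not by itself yield $\lambda(A_{\alpha}\setminus B)=0$ --- as you implicitly acknowledge by falling back on the hypothesis for that point).
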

\begin{proof}
    Since for all $\alpha \in \Lambda$, $\lambda(A_\alpha \setminus B)=0$ thus by lemma \ref{8}, $\Theta_{\mathcal{I}}(A_\alpha) \subseteq \Theta_{\mathcal{I}}(B)$. So, $\bigcup_{\alpha \in \Lambda}\Theta_{\mathcal{I}}(A_{\alpha}) \subseteq \Theta_{\mathcal{I}}(B)$. Since, $B$ is measurable so by Theorem \ref{4}, $\lambda(B \triangle \Theta_\mathcal{I}(B))=0$ which implies $\Theta_\mathcal{I}(B)$ is measurable. Thus, $$B \subseteq \bigcup_{\alpha \in \Lambda}A_{\alpha} \subseteq \bigcup_{\alpha \in \Lambda}\Theta_{\mathcal{I}}(A_{\alpha}) \subseteq \Theta_{\mathcal{I}}(B)$$
    Since, $B$ and $\Theta_{\mathcal{I}}(B)$ are both measurable and they differ by a null set, so $\bigcup_{\alpha \in \Lambda}A_{\alpha}$ is measurable.
\end{proof}

	\begin{thm}
		The collection $\mathcal{T}^{\mathcal{I}}$ is a topology on $\mathbb{R}$.
	\end{thm}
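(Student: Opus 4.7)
The task is to verify the three axioms of a topology for $\mathcal{T}^{\mathcal{I}}$. Before diving in, I would record a useful observation: every nonempty element of $\mathcal{T}^{\mathcal{I}}$ has positive Lebesgue measure, since by Lemma \ref{9} a null set $A$ has $\Theta_\mathcal{I}(A)=\emptyset$, which together with $A\subseteq \Theta_\mathcal{I}(A)$ forces $A=\emptyset$. This will be important when invoking Zorn's lemma below.

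I would start with the easy axioms. The empty set lies in $\mathcal{T}^{\mathcal{I}}$ trivially, and $\mathbb{R}\in\mathcal{T}^{\mathcal{I}}$ follows from Lemma \ref{9} (applied with $A=\emptyset$), which yields $\Theta_\mathcal{I}(\mathbb{R})=\mathbb{R}$. For closure under binary (hence finite) intersections, if $A,B\in\mathcal{T}^{\mathcal{I}}$ then $A\cap B\in\mathcal{L}$ and Theorem \ref{7} gives
\[
\Theta_\mathcal{I}(A\cap B)=\Theta_\mathcal{I}(A)\cap\Theta_\mathcal{I}(B)\supseteq A\cap B,
\]
so $A\cap B\in\mathcal{T}^{\mathcal{I}}$.

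The main work, and the point I expect to be the real obstacle, is closure under arbitrary unions: a priori an arbitrary union of measurable sets need not be measurable. Let $\{A_\alpha\}_{\alpha\in\Lambda}\subseteq\mathcal{T}^{\mathcal{I}}$ and $U=\bigcup_{\alpha}A_\alpha$, which I may assume nonempty. I would apply Zorn's lemma to the poset (ordered by inclusion) of pairwise disjoint families of measurable subsets of $U$ having positive Lebesgue measure; this poset is nonempty by the opening observation, and upper bounds for chains exist by taking unions. Let $\{D_\beta\}_{\beta\in\Sigma}$ be a maximal such family. By the countable chain condition proved in Theorem \ref{10}, $\Sigma$ is countable, so $B:=\bigcup_{\beta\in\Sigma}D_\beta$ is measurable and $B\subseteq U$. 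For each $\alpha$ the set $A_\alpha\setminus B$ is measurable and disjoint from $B$; if $\lambda(A_\alpha\setminus B)>0$ we could enlarge the family by adjoining it, contradicting maximality, hence $\lambda(A_\alpha\setminus B)=0$ for every $\alpha$. The theorem immediately preceding the statement then forces $U$ to be measurable.

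It remains to verify $U\subseteq \Theta_\mathcal{I}(U)$. Given $x\in U$, pick $\alpha$ with $x\in A_\alpha$; then $x\in \Theta_\mathcal{I}(A_\alpha)$, and since $A_\alpha\subseteq U$ monotonicity of $\Theta_\mathcal{I}$ (Lemma \ref{6}) gives $\Theta_\mathcal{I}(A_\alpha)\subseteq\Theta_\mathcal{I}(U)$, so $x\in\Theta_\mathcal{I}(U)$. Thus $U\in\mathcal{T}^{\mathcal{I}}$, completing the verification.
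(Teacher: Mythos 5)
Your proof is correct, and its overall architecture is the paper's: the trivial axioms and finite intersections are handled exactly as in the paper (via Lemma \ref{9} and Theorem \ref{7}), the inclusion $\bigcup_\alpha A_\alpha \subseteq \Theta_{\mathcal{I}}(\bigcup_\alpha A_\alpha)$ follows from monotonicity in both, and in both the crux of measurability is to manufacture a countable measurable $B \subseteq U$ with $\lambda(A_\alpha \setminus B)=0$ for every $\alpha$, with the countable chain condition (Theorem \ref{10}) doing the real work. The genuine difference is how $B$ is produced. The paper well-orders $\Lambda$ and runs a greedy selection, choosing $\alpha_m$ whenever $A_{\alpha_m}$ adds positive measure to $\bigcup_{n<m} A_{\alpha_n}$; as written this is an $\omega$-length recursion, and one must still argue that when it does not halt at a finite stage the resulting countable union already almost-covers every $A_\alpha$ (the increments $A_{\alpha_m}\setminus\bigcup_{n<m}A_{\alpha_n}$ are pairwise disjoint of positive measure, so CCC bounds the recursion, but making this airtight is really a transfinite argument). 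Your route --- Zorn's lemma on pairwise disjoint positive-measure measurable subsets of $U$, with maximality forcing $\lambda(A_\alpha\setminus B)=0$ --- sidesteps that bookkeeping and invokes Theorem \ref{10} in exactly the form it is stated; it is the cleaner of the two. You also close by citing the theorem immediately preceding the statement, where the paper re-derives that step inline from Lemma \ref{8} and Theorem \ref{4}; this is only presentational, though note that theorem's hypothesis must be read as ``for \emph{some} measurable $B\subseteq\bigcup_\alpha A_\alpha$,'' which is how you (correctly) apply it.
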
 
	\begin{proof}
 Clearly by lemma \ref{9}, $\Theta_{\mathcal{I}}(\emptyset)=\emptyset$ and $\Theta_{\mathcal{I}}(\mathbb{R})=\mathbb{R}$ and both $\emptyset$ and $\mathbb{R}$ are measurable. So, $\emptyset, \mathbb{R} \in \mathcal{T}^{\mathcal{I}}$.  Now we are to show $\mathcal{T}^{\mathcal{I}}$ is closed under finite intersection. Let $\{A_{\alpha_1}, A_{\alpha_2}, \cdots, A_{\alpha_n}\}$ be any finite collection in $\mathcal{T}^{\mathcal{I}}$. So each $A_{\alpha_k}$ is measurable and $A_{\alpha_k} \subseteq \Theta_{\mathcal{I}}(A_{\alpha_k})$ for each $k$. Clearly, $\bigcap_{k=1}^{n}A_{\alpha_k}$ is measurable. By Theorem \ref{7}, $$\bigcap_{k=1}^{n}A_{\alpha_k} \subseteq \bigcap_{k=1}^{n} \Theta_{\mathcal{I}}(A_{\alpha_k})=\Theta_{\mathcal{I}}(\bigcap_{k=1}^{n}A_{\alpha_k})$$
 Therefore, $\bigcap_{k=1}^{n}A_{\alpha_k} \in \mathcal{T}^{\mathcal{I}}$.

 Next we are to show $\mathcal{T}^{\mathcal{I}}$ is closed under arbitrary unions. If $\{A_\alpha\}_{\alpha \in \Lambda}$ is an arbitrary collection of sets in $\mathcal{T}^{\mathcal{I}}$ where $\Lambda$ is arbitrary indexing set then $\bigcup_{\alpha \in \Lambda}A_{\alpha} \in \mathcal{T}^{\mathcal{I}}$ i.e. we are to show $\bigcup_{\alpha \in \Lambda}A_{\alpha} \subseteq \Theta_{\mathcal{I}}(\bigcup_{\alpha \in \Lambda}A_{\alpha})$ and $\bigcup_{\alpha \in \Lambda}A_{\alpha}$ is measurable.

 Since for each  $\alpha \in \Lambda$, $A_{\alpha} \in \mathcal{T}^{\mathcal{I}}$ we have $A_{\alpha} \subseteq \Theta_{\mathcal{I}}(A_{\alpha})$ and it follows that $\bigcup_{\alpha \in \Lambda}A_{\alpha} \subseteq \bigcup_{\alpha \in \Lambda}\Theta_{\mathcal{I}}(A_{\alpha})$. Let $x \in \bigcup_{\alpha \in \Lambda}\Theta_{\mathcal{I}}(A_{\alpha})$ then there exists $\beta \in \Lambda$ such that $x \in \Theta_{\mathcal{I}}(A_{\beta})$. Note that $A_{\beta} \subset \bigcup_{\alpha \in \Lambda}A_{\alpha}$ so $\Theta_{\mathcal{I}}(A_{\beta}) \subseteq \Theta_{\mathcal{I}}(\bigcup_{\alpha \in \Lambda}A_{\alpha})$. Thus $x \in \Theta_{\mathcal{I}}(\bigcup_{\alpha \in \Lambda}A_{\alpha})$. So, $\bigcup_{\alpha \in \Lambda}\Theta_{\mathcal{I}}(A_{\alpha}) \subseteq \Theta_{\mathcal{I}}(\bigcup_{\alpha \in \Lambda}A_{\alpha})$. Therefore, $\bigcup_{\alpha \in \Lambda}A_{\alpha} \subseteq \Theta_{\mathcal{I}}(\bigcup_{\alpha \in \Lambda}A_{\alpha})$.

 It remains to show that arbitrary union of members of $\mathcal{T}^{\mathcal{I}}$ is measurable. Let $\{A_\alpha\}_{\alpha \in \Lambda}$ be an arbitrary collection of sets in $\mathcal{T}^{\mathcal{I}}$ where $\Lambda$ is arbitrary indexing set. Since by lemma \ref{9}, $\lambda(A)=0$ implies $\Theta_{\mathcal{I}}(A)=\emptyset$ which in turn implies $A \notin \mathcal{T}^{\mathcal{I}}$ so clearly, $\lambda(A_\alpha)>0$ for all $\alpha$. We choose a sequence in $\Lambda$ in the following way. By Well Ordering Principle, every set can be well ordered. So we can linearly order the elements of $\Lambda$. Choose the first element of $\Lambda$ to be $\alpha_0$. Following the linear order on $\Lambda$ compare each element $A_{\alpha^{'}}$ with $A_{\alpha_0}$. If $\lambda(A_{\alpha^{'}} \setminus A_{\alpha_0})>0$ let $\alpha_1 = \alpha^{'}$. If no such $\alpha^{'}$ exists let us take the sequence to be $(\alpha_0, \alpha_0, \cdots)$. Once $\alpha_1$ is chosen search through $\Lambda$ starting after $\alpha_1$ to find $A_{\alpha_2}$ such that $\lambda(A_{\alpha_2} \setminus (A_{\alpha_0} \cup A_{\alpha_1}))>0$. If no such $\alpha_2$ exists then take the sequence as $(\alpha_0, \alpha_1, \alpha_1, \cdots)$.  Continuing for each $n \in \mathbb{N}$ at any step $m$, assuming $\alpha_{m-1}$ is already chosen, search through $\Lambda$ starting after $\alpha_{m-1}$ to find $A_{\alpha_m}$ such that $\lambda(A_{\alpha_m} \setminus \bigcup_{n=0}^{m-1}A_{\alpha_n})>0$. If no $A_{\alpha_m}$ can be found let the sequence be $(\alpha_0, \alpha_1, \cdots, \alpha_{m-1}, \alpha_{m-1}, \cdots)$. Whether or not a unique $\alpha_{n}$ can be found for each $n$, by Theorem $4.1$ the sequence may be atmost countably long. So we obtain a sequence $\{\alpha_n\}_{n \in \mathbb{N}}$ such that for any $\alpha \in \Lambda$ we have $\lambda(A_{\alpha} \setminus \bigcup_{n=0}^{\infty}A_{\alpha_n})=0$. Since $\{A_{\alpha_n}\}$ is a countable sequence of measurable sets so $\bigcup_{n=0}^{\infty} A_{\alpha_n}$ is measurable. Now by Lemma \ref{8} for any $\alpha \in \Lambda$, 
 $$\lambda(A_{\alpha} \setminus \bigcup_{n=0}^{\infty}A_{\alpha_n})=0 \implies \Theta_{\mathcal{I}}(A_{\alpha}) \subseteq \Theta_{\mathcal{I}}(\bigcup_{n=0}^{\infty}A_{\alpha_n})$$
 Hence, $\bigcup_{\alpha \in \Lambda}\Theta_{\mathcal{I}}(A_{\alpha}) \subseteq \Theta_{\mathcal{I}}\left(\bigcup_{n=0}^{\infty}A_{\alpha_n}\right)$.
 By Lebesgue $\mathcal{I}$-density theorem \ref{4}, $$\lambda \left(\bigcup_{n=0}^{\infty}A_{\alpha_n} \triangle \Theta_{\mathcal{I}}\left(\bigcup_{n=0}^{\infty}A_{\alpha_n}\right)\right)=0$$
 So, $\Theta_{\mathcal{I}}\left(\bigcup_{n=0}^{\infty}A_{\alpha_n}\right)$ is measurable. Thus, 
 $$\bigcup_{n=0}^{\infty}A_{\alpha_n} \subseteq \bigcup_{\alpha \in \Lambda}A_{\alpha} \subseteq \bigcup_{\alpha \in \Lambda}\Theta_{\mathcal{I}}(A_{\alpha}) \subseteq \Theta_{\mathcal{I}}\left(\bigcup_{n=0}^{\infty}A_{\alpha_n}\right)$$
 Since, $\bigcup_{n=0}^{\infty}A_{\alpha_n}$ and $\Theta_{\mathcal{I}}\left(\bigcup_{n=0}^{\infty}A_{\alpha_n}\right)$ both are measurable and differ by a null set, so $\bigcup_{\alpha \in \Lambda}A_{\alpha}$ is measurable. Thus, $\mathcal{T}^{\mathcal{I}}$ is closed under arbitrary unions. This completes the proof.
 
	\end{proof}
	
	We name the topology $\mathcal{T}^{\mathcal{I}}$ to be the $\mathcal{I}$-density topology on $\mathbb{R}$ and the pair $(\mathbb{R},\mathcal{T}^{\mathcal{I}})$ is the corresponding topological space. 

 \begin{dfn}(\cite{Oxtoby})
     If a class of subsets of an arbitrary set X is closed under countable
unions, complementations, and contains the set X itself, then the class is called a $\sigma$-algebra.
 \end{dfn}

The $\sigma$-algebra that is generated by the open sets of any given topology is the collection of Borel sets of that topology.

\begin{dfn}(\cite{Oxtoby})
    Let $\mathcal{T}$ be any topology. The collection of Borel sets of
$\mathcal{T}$ is the smallest $\sigma$-algebra containing the open sets of $\mathcal{T}$.
\end{dfn}

The collection of Borel sets can be characterized as the $\sigma$-algebra generated by the open sets. Thus, we can talk about Borel sets on any given topological space. The $\mathcal{I}$-density topology requires open sets to be Lebesgue measurable but not all measurable sets are open in $(\mathbb{R},\mathcal{T}^{\mathcal{I}})$. In \cite{Scheinberg} S. Scheinberg proved the existence of a topology on the space of reals in which the Borel sets are precisely the Lebesgue measurable sets. Likewise we give a characterization of Lebesgue measurable subsets of reals in the following theorem.

 \begin{thm}
   The Borel sets in the $\mathcal{I}$-density topology $\mathcal{T}^{\mathcal{I}}$ on the space of reals are precisely the Lebesgue measurable sets.
 \end{thm}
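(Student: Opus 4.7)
The plan has two inclusions. The easy direction is that every $\mathcal{T}^{\mathcal{I}}$-Borel set is Lebesgue measurable: by definition every member of $\mathcal{T}^{\mathcal{I}}$ lies in $\mathcal{L}$, and $\mathcal{L}$ is itself a $\sigma$-algebra, so the smallest $\sigma$-algebra containing $\mathcal{T}^{\mathcal{I}}$ is contained in $\mathcal{L}$. I would dispatch this in one sentence.

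For the reverse inclusion I would build from two observations. First, for any Lebesgue measurable $A$ the set $\Theta_{\mathcal{I}}(A)$ is itself $\mathcal{T}^{\mathcal{I}}$-open: by the Lebesgue $\mathcal{I}$-density theorem (Theorem \ref{4}), $\Theta_{\mathcal{I}}(A)$ is measurable, and by the idempotency corollary $\Theta_{\mathcal{I}}(\Theta_{\mathcal{I}}(A))=\Theta_{\mathcal{I}}(A)$, which gives $\Theta_{\mathcal{I}}(A)\subseteq \Theta_{\mathcal{I}}(\Theta_{\mathcal{I}}(A))$, so $\Theta_{\mathcal{I}}(A)\in\mathcal{T}^{\mathcal{I}}$. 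In particular, $\Theta_{\mathcal{I}}(A)$ is $\mathcal{T}^{\mathcal{I}}$-Borel. Second, every Lebesgue null set $N$ is $\mathcal{T}^{\mathcal{I}}$-closed: by Lemma \ref{9}, $\Theta_{\mathcal{I}}(\mathbb{R}\setminus N)=\mathbb{R}$, and since $\mathbb{R}\setminus N$ is measurable, $\mathbb{R}\setminus N\in\mathcal{T}^{\mathcal{I}}$, so $N$ is $\mathcal{T}^{\mathcal{I}}$-closed and in particular Borel.

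To finish, given $A\in\mathcal{L}$, I would write
\[
A \;=\; \bigl(\Theta_{\mathcal{I}}(A) \setminus (\Theta_{\mathcal{I}}(A)\setminus A)\bigr) \,\cup\, (A\setminus \Theta_{\mathcal{I}}(A)).
\]
Both $\Theta_{\mathcal{I}}(A)\setminus A$ and $A\setminus \Theta_{\mathcal{I}}(A)$ are measurable subsets of the Lebesgue null set $A\triangle \Theta_{\mathcal{I}}(A)$, so each has Lebesgue measure zero. By the second observation they are $\mathcal{T}^{\mathcal{I}}$-Borel, and by the first observation so is $\Theta_{\mathcal{I}}(A)$. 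Combining using the $\sigma$-algebra operations shows $A$ is $\mathcal{T}^{\mathcal{I}}$-Borel.

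The only real content is the two observations above, both of which are immediate from the already-established Lebesgue $\mathcal{I}$-density theorem, the idempotency corollary, and Lemma \ref{9}; I do not foresee any genuine obstacle beyond being careful that $A\triangle \Theta_{\mathcal{I}}(A)$ being null forces its two pieces to be measurable (hence to fall under Lemma \ref{9}). The argument mirrors the classical proof for ordinary density topology, with Lemma \ref{9} playing the role that makes the density-open/closed structure rich enough to recover all of $\mathcal{L}$ in the Borel hierarchy.
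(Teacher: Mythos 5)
Your proof is correct and follows essentially the same route as the paper: both directions rest on the measurability of $\mathcal{T}^{\mathcal{I}}$-open sets, the Lebesgue $\mathcal{I}$-density theorem, idempotency of $\Theta_{\mathcal{I}}$, and Lemma \ref{9} making null sets $\mathcal{T}^{\mathcal{I}}$-closed. The only cosmetic difference is that the paper shows the piece $B\cap\Theta_{\mathcal{I}}(B)$ is itself $\mathcal{T}^{\mathcal{I}}$-open via $\Theta_{\mathcal{I}}(A\cap B)=\Theta_{\mathcal{I}}(A)\cap\Theta_{\mathcal{I}}(B)$, whereas you realize it as $\Theta_{\mathcal{I}}(A)$ minus a null (hence closed) set — an equivalent bookkeeping choice.
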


 \begin{proof}
     Let $B$ be a Borel set in $\mathcal{T}^{\mathcal{I}}$. So $B$ is formed through the operation of countable union, countable intersection and relative complement of sets in $\mathcal{T}^{\mathcal{I}}$. Since each element of $\mathcal{T}^{\mathcal{I}}$ is measurable so $B$ is measurable. Conversely, let $B$ be lebesgue measurable. Then by theorem \ref{4} we can write $B=C \cup D$ where 
 $C=B \cap \Theta_{\mathcal{I}}(B)$ and $D$ is measure zero set. Then clearly the set $C$ is measurable since both $B$ and $\Theta_{\mathcal{I}}(B)$ are measurable. Next, $\Theta_{\mathcal{I}}(C)= \Theta_{\mathcal{I}}(B) \cap \Theta_{\mathcal{I}}(\Theta_{\mathcal{I}}(B))=\Theta_{\mathcal{I}}(B) \supseteq C$. Therefore, the set $C$ is $\mathcal{T}^{\mathcal{I}}$-open. Again, $\lambda(D)=0$, so $D$ is measurable which implies $\mathbb{R} \setminus D$ is measurable and  by lemma \ref{9}, $\Theta_{\mathcal{I}}(\mathbb{R} \setminus D)=\mathbb{R} \supseteq \mathbb{R}\setminus D$. So, $\mathbb{R} \setminus D$ is $\mathcal{T}^{\mathcal{I}}$-open and consequently $D$ is $\mathcal{T}^{\mathcal{I}}$-closed. Thus, $B$ is union of $\mathcal{T}^{\mathcal{I}}$-open and $\mathcal{T}^{\mathcal{I}}$-closed set. So, $B$ is a Borel set in $\mathcal{T}^{\mathcal{I}}$. Therefore, the result follows.
 \end{proof}

	
	
		

	
	

	\section*{Acknowledgements}
	\textit{The first author is thankful to The Council of Scientific and Industrial Research (CSIR), Government of India, for giving the award of Senior Research Fellowship (File no. 09/025(0277)/2019-EMR-I) during the tenure of preparation of this research paper.}

\end{document}